\documentclass{paper}


\usepackage{amsthm} 

\usepackage{amsfonts}
\usepackage{amsmath}
\usepackage{amssymb}
\usepackage{makeidx}
\usepackage{epsfig}
\usepackage{graphicx}
\usepackage{color}
\usepackage{cite}
\usepackage{url}

\usepackage{url}

\usepackage[rgb,letterpaper]{xcolor}
\usepackage{cite}
\usepackage[letterpaper]{psfrag}
\usepackage{tikz}
\usepackage[process=auto,crop=pdfcrop]{pstool}
\newtheorem{theorem}{Theorem}

\newtheorem{lemma}{Lemma}
\newtheorem{corollary}{Corollary}
\newtheorem{definition}{Definition}

\newcommand{\re}{{\mathbb R}}

\newcommand{\n}{{\mathbb N}}
\newcommand{\q}{{\mathbb Q}}

\newcommand{\cM}{{\cal{M}}}
\newcommand{\cm}{{\cal{M}}}

\title{{Lower Bounds on Complexity of Lyapunov Functions for Switched Linear Systems}} 

\author{Amir Ali Ahmadi\thanks{Amir Ali Ahmadi is with the Department of Operations Research and Financial Engineering at Princeton University. Email: \texttt{a\_a\_a@princeton.edu}. He has been partially supported for this work by the AFOSR Young Investigator Program Award.} and Rapha\"el~M. Jungers\thanks{Rapha\"el~M. Jungers is with the ICTEAM Institute, Universit\'e catholique de Louvain. Email: \texttt{raphael.jungers@uclouvain.be}. He has been partially supported for this work by the Communaut\'e francaise de Belgique - Actions de Recherche Concert\'ees, and by the Belgian Programme on Interuniversity Attraction Poles initiated by the Belgian Federal Science Policy Office. R.J. is a F.R.S.-FNRS Research Associate. }
}

\begin{document}
\maketitle


%



%
\begin{abstract}
 We show that for any positive integer $d$, there are families of switched linear systems---in fixed dimension and defined by two matrices only---that are stable under arbitrary switching but do not admit (i) a polynomial Lyapunov function of degree $\leq d$, or (ii) a polytopic Lyapunov function with $\leq d$ facets, or (iii) a piecewise quadratic Lyapunov function with $\leq d$ pieces. This implies that there cannot be an upper bound on the size of the linear and semidefinite programs that search for such stability certificates.\\ Several constructive and non-constructive arguments are presented which connect our problem to known (and rather classical) results in the literature regarding the finiteness conjecture, undecidability, and non-algebraicity of the joint spectral radius. 
 \\In particular, we show that existence of \rj{an extremal piecewise algebraic Lyapunov function implies the finiteness property of the optimal product, generalizing a result of Lagarias and Wang.  \aaa{As a corollary, we prove that the finiteness property holds for sets of matrices with an extremal Lyapunov function belonging to some of the most popular function classes in controls.}} \\
{\bf Index terms:} {stability of switched systems, linear difference inclusions, the finiteness conjecture of the joint spectral radius, convex optimization for Lyapunov analysis.}
\end{abstract}

%
%

\section{Introduction}
\aaa{

In recent years, hybrid systems have been well recognized as a powerful modeling framework for complex dynamical systems. This in turn has led to a surge of activity to design computational tools for their analysis and control. Many of these tools are developed based on a synergy between classical ideas from Lyapunov theory (often appropriately modified or generalized to deal with the intricacies of hybrid systems) and modern techniques of numerical optimization, notably convex optimization. The outcome of this synergy is an automated search process for appropriately-defined Lyapunov functions that certify desired properties of system trajectories (e.g., stability, boundedness, etc.). 

Our goal in this paper is to show some basic and concrete limitations of this broad class of techniques for analysis of hybrid systems. Since we are aiming at negative results, the simpler we can make the mathematical setting of our study, the more powerful the implications of our theorems will be. For this reason, we choose to focus on arguably the simplest class of hybrid systems, namely, \emph{switched linear systems}, and ask arguably the simplest question of interest about them, namely, \emph{stability}.
}

\rj{More precisely,} the input to our problem is a set of $m$ real $n\times n$ matrices $\Sigma\mathrel{\mathop:}=\{A_1,\ldots,A_m\}$. This set describes a switched linear system of the form
\begin{equation}\label{eq:switched.linear.sys}
x_{k+1}=A_{\sigma\left(  k\right)  }x_{k},
\end{equation}
where $k$ is the index of time and $\sigma:\mathbb{Z\rightarrow}\left\{  1,...,m\right\}$ is a
map from the set of positive integers to the set $\{1,\ldots,m\}$. A basic notion of stability is that of \emph{absolutely asymptotically
stable} (AAS), also referred to \emph{asymptotic stability under arbitrary switching} (ASUAS), which asks whether all initial conditions in $\mathbb{R}^n$ converge to the origin for \emph{all} possible switching sequences. It is not difficult to show that absolute asymptotic
stability of (\ref{eq:switched.linear.sys}) is equivalent to absolute asymptotic stability of the linear difference inclusion
\begin{equation}\label{eq:linear.difference.inclusion}
x_{k+1}\in \mbox{co}{\Sigma}\ x_{k},
\end{equation}
where $\mbox{co}{\Sigma}$ here denotes the convex hull of the
set $\Sigma$. \rj{Dynamical systems of this type have been widely studied since they model a linear system which is subject to time-dependent uncertainty}. See for instance \cite{liberzon-switching}, \cite{shorten-siamreview07}, or \cite{Raphael_Book} for applications in systems and control.

When the set $\Sigma$ consists of a single matrix $A$ (i.e., $m=1$), we are of course in the simple case of a linear system where asymptotic stability is equivalent to the spectral radius of $A$ having modulus less than one. This condition is also equivalent to existence of a \emph{quadratic} Lyapunov function and can be checked in polynomial time. When $m\geq 2$, however, no efficiently checkable criterion is known for AAS. Arguably, the most promising approaches in the literature have been to use convex optimization (typically linear programming (LP) or semidefinite programming (SDP)) to construct Lyapunov functions that serve as certificates of stability. The most basic example is that of a \emph{common quadratic Lyapunov function} (CQLF), which is a positive definite quadratic form $x^TQx$ that decreases with respect to all $m$ matrices, i.e., satisfies $$x^T(A_i^TQA_i-Q)x<0, \forall x\in\mathbb{R}^n\aaa{\setminus\{0\}}, i=1,\ldots,m.$$ On the positive side, the search for such a quadratic function is efficient numerically as it readily provides a semidefinite program. On the negative side, and in contrast to the case of linear systems, existence of a CQLF is a sufficient but \emph{not} necessary condition for stability. Indeed, a number of authors have constructed examples of AAS switched systems which do not admit a CQLF and studied various criteria for existence of a CQLF (\cite{Ando98,Dayawansa_switching,mason-shorten,Olshevsky_no_quadratic}).

To remedy this shortcoming, several richer and more complex classes of Lyapunov functions have been introduced.  We list here the five that are perhaps the most ubiquitous:

{\bf Polynomial Lyapunov functions.} A homogeneous\footnote{Since the dynamics in (\ref{eq:switched.linear.sys}) is homogeneous, there is no loss of generality in parameterizing our Lyapunov functions as homogeneous functions. Also, we drop the prefix ``common'' from the terminology ``common polynomial Lyapunov function'' as it is implicit that our Lyapunov functions are always common to all $m$ matrices $A_i$ in $\Sigma$.} multivariate polynomial $p(x)$ of some even degree $d$ is a \emph{polynomial Lyapunov function} for (\ref{eq:switched.linear.sys}) if it is positive definite\footnote{A form (i.e., homogeneous polynomial) $p$ is \emph{positive definite} if $p(x)>0$ for all $x\neq 0$.} and makes $p(x)-p(A_ix)$ positive definite for $i=1,\ldots,m$.

Although this is a rich class of functions, a numerical search for polynomial Lyapunov functions is an intractable task even when the degree $d$ is fixed to $4$. In fact, even testing if a given quartic form is positive definite is NP-hard in the strong sense (see, e.g.,\cite{AAA_Cubic_vec_field}). A popular and more tractable subclass of polynomial Lyapunov functions is that of sum of squares Lyapunov functions.

{\bf Sum of squares (sos) Lyapunov functions.} A homogeneous polynomial of some even degree $d$ is a \emph{sum of squares (sos) Lyapunov function} for (\ref{eq:switched.linear.sys}) if $p$ is positive definite and a sum of squares\footnote{A polynomial $p$ is a sum of squares if it can be written as $p=\sum_i q_i^2$ for some polynomials $q_i$.}, and if all the $m$ polynomials $p(x)-p(A_ix)$, $i=1,\ldots,m$ are also positive definite and sums of squares.

For any fixed degree $d$, the search for an sos Lyapunov function of degree $d$ is a semidefinite program of size polynomial in the input. When $d=2$, this class coincides with CQLFs as nonnegative quadratic forms are always sums of squares. Moreover, existence of an sos Lyapunov function is not only sufficient but also necessary for AAS of (\ref{eq:switched.linear.sys})~\cite{Pablo_Jadbabaie_JSR_journal}. This of course implies that existence of polynomial Lyapunov functions defined above is also necessary and sufficient for stability.

{\bf Polytopic Lyapunov functions.} A polytopic Lyapunov function $V$ for (\ref{eq:switched.linear.sys}) with $d$ pieces is one that is a pointwise maximum of $d$ linear functions: $$V(x)\mathrel{\mathop:}=\max_{i=1\ldots,d} |c_i^Tx|, $$ where $c_1,\ldots,c_m$ span $\mathbb{R}^n$.  The sublevel sets of such functions are polytopes, justifying their name. Polytopic Lyapunov functions (with enough number of pieces) are also necessary and sufficient for absolute asymptotic stability. One can use linear programming to search for subclasses of these Lyapunov functions. These subclasses are big enough to also comprise a necessary and sufficient condition for stability (see \cite{polanski1997LyapunovLP,blanchini-miani,polanski2000absolute,switched_system_survey}).

{\bf Max-of-quadratics Lyapunov functions.}  A \emph{max-of-quadratics} Lyapunov function $V$ for (\ref{eq:switched.linear.sys}) with $d$ pieces is one that is a pointwise maximum of $d$ positive definite quadratics: $$V(x)\mathrel{\mathop:}=\max_{i=1,\ldots,d} x^TQ_ix, $$ where $Q_i\succ 0$.  The sublevel sets of such functions are intersections of ellipsoids.

{\bf Min-of-quadratics Lyapunov functions.}  A \emph{min-of-quadratics} Lyapunov function $V$ for (\ref{eq:switched.linear.sys}) with $d$ pieces is one that is a pointwise minimum of $d$ positive definite quadratics: $$V(x)\mathrel{\mathop:}=\min_{i=1,\ldots,d} x^TQ_ix, $$ where $Q_i\succ 0$.  The sublevel sets of such functions are unions of ellipsoids. 

By a {\bf piecewise quadratic Lyapunov function}, we mean one that is either a max-of-quadratics or a min-of-quadratics. Both of these families are known to provide necessary and sufficient conditions for AAS. Several references in the literature produce semidefinite programs that can search over a subclass of max-of-quadratics or min-of-quadratics Lyapunov functions (see \cite{convex_conjugate_Lyap}). These subclasses alone also provide necessary and sufficient conditions for AAS.
A unified framework to produce such SDPs is presented in \cite{JSR_path.complete_journal},~\cite{HSCC_JSR_Path_complete}, where a recipe for writing down stability proving linear matrix inequalities is presented based on some connections to automata theory.

For all classes of functions we presented, one can think of $d$ as a \emph{complexity parameter} of the Lyapunov functions. The larger the parameter $d$, the more complex our Lyapunov function would look like and the bigger the size of an LP or an SDP searching for it would need to be.

\subsection{Motivation and contributions}
Despite the encouraging fact that all five classes of Lyapunov functions mentioned above provide necessary and sufficient conditions for AAS of (\ref{eq:switched.linear.sys}) that are amenable to computational search via LP or SDP, all methods offer an \emph{infinite hierarchy} of algorithms, for increasing values of $d$, leaving unclear the natural questions: How high should one go in the hierarchy to obtain a proof of stability? How does this number depend on $n$ (the dimension) and $m$ (the number of matrices)? Unlike the case of CQLF which is ruled out as a necessary condition for stability through several counterexamples in the literature, we are not aware of that many counterexamples that rule out more complicated Lyapunov functions. For example, is there an example of a set of matrices that is AAS but does not admit a polynomial Lyapunov function of degree $4$, or $6$, or $200$?\footnote{The largest degree existing counterexample that we know of is one of our own, appearing in \cite{sosconvex_Lyap_cdc}, which is a pair of AAS $2\times 2$ matrices with no polynomial Lyapunov function of degree $14$.} Or, is there an example of a set of matrices that is AAS but does not admit a piecewise quadratic Lyapunov function with $200$ pieces? If such sets of matrices exist, how complicated do they look like? How many matrices should they have and in what dimensions should they appear?

In this paper we give an answer to these questions, providing constructive and non-constructive arguments for existence of ``families of \rj{very} bad matrices'', i.e., those forcing the complexity parameter $d$ of all Lyapunov functions to be arbitrarily large, even for fixed $n$ and $m$ (in fact, even for the minimal \rj{nontrivial} situation $n=m=2$). The formal statement is given in Theorem~\ref{thm:Lyaps.blow.up.families} below.

It is important to remark that the families of matrices we present have already appeared in rather well-established literature, though for different purposes. These matrices have to do with the ``non-algebraicity'' and the ``finiteness property'' of the notion of \emph{joint spectral radius} (JSR) (see Sections \ref{section-algebraicity} and \ref{section-finiteness} for definitions). This leaves us with the much simplified task of establishing a formal connection between these two concepts and that of complexity of Lyapunov functions. \aaa{We hope that clarifying these connections sheds new light on the intrinsic relationship between the JSR and the stability question for switched linear systems.
Indeed, many of the results that we refer to in the literature on the JSR appear much before counterexamples to existence of CQLF in the switched systems literature.


The theorem stated below is proven over the next two sections.}

\begin{theorem} \label{thm:Lyaps.blow.up.families}
For any positive integer $d$, the following families of matrices (parameterized by $k$) include switched systems that are asymptotically stable under arbitrary switching but do not admit (i) a polynomial (hence sos) Lyapunov function of degree $\leq d$, or (ii) a polytopic Lyapunov function with $\leq d$ facets, or (iii) a max-of-quadratics Lyapunov function with $\leq d$ pieces, or (iv) a min-of-quadratics Lyapunov function with $\leq d$ pieces:
\begin{enumerate}
\item $(1-\frac{1}{k})\{A_1,A_2\}$, with
 $$A_1=\frac{(1-t^4)}{(1-3\pi t^3/2)}\begin{bmatrix} {\sqrt{1-t^2}} & -{t} \\ 0 & 0\end{bmatrix},$$ $$A_2=(1-t^4)\begin{bmatrix} \sqrt{1-t^2} & -t \\ t & \sqrt{1-t^2}\end{bmatrix},$$ 
where $t=\sin\frac{2\pi}{2k+1}$ and $k=1,2,\ldots$.

\noindent (This family appears in the work of \cite{Koz90} as an example demonstrating that the joint spectral radius is not a semialgebraic quantity; see Section \ref{section-algebraicity}.)

\item  $$A_1=\begin{bmatrix} 1 & 1 \\ 0 & 1\end{bmatrix}, A_2=\alpha\begin{bmatrix} 1 & 0 \\ 1 & 1\end{bmatrix},$$ with $$\alpha<\alpha_*\simeq 0.749326546330367557943961948091344672091....$$ \noindent (This set appears in \cite{finiteness_conjec_counterexample,FP_explicit_counterexample} as a counterexample to the finiteness property.)

\item $(1-\frac{1}{k})\{A_1,A_2\}$, with 
$$A_1=\alpha^k\begin{bmatrix} 0 & 0 \\ 1 & 0\end{bmatrix},\ A_2=\alpha^{-1}\begin{bmatrix}
\cos\frac{\pi}{2k} & \sin\frac{\pi}{2k}  \\ -\sin\frac{\pi}{2k} &\cos\frac{\pi}{2k}\end{bmatrix},$$ where $$1<\alpha<(\cos\frac{\pi}{2k})^{-1}.$$ (This family appears in the work of \cite{LagariasWang_Finiteness_Conjecture} as an example demonstrating that the length of the optimal product cannot be bounded; see Section \ref{section-finiteness}.)
 
\end{enumerate}

\end{theorem}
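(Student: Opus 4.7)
The plan is to prove the three parts of Theorem 1 by contradiction, reducing each to a known pathological property of the joint spectral radius (JSR) in the cited families. The unifying strategy is a ``complexity transfer'' principle: existence of a Lyapunov function of bounded complexity in any of the four classes forces a structural restriction on the JSR, which the three families violate by design.

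The heart of the argument lies in establishing two auxiliary principles. \textbf{Principle A (algebraicity).} If a parameterized family $\Sigma_t$ admits a Lyapunov function of one of the four types with complexity $\leq d$ for every $t$, then the set $\{t:\rho(\Sigma_t)\leq 1\}$ is semialgebraic of complexity bounded by a function of $d$. The proof encodes the statement ``there exists a valid Lyapunov function of complexity $\leq d$'' as a first-order formula over an appropriate parameter space (polynomial coefficients for class (i), polytope vertices for class (ii), quadratic forms for classes (iii)--(iv)), and then applies Tarski--Seidenberg quantifier elimination. \textbf{Principle B (finiteness).} If $\Sigma$ admits an \emph{extremal} Lyapunov function of one of the four classes with complexity $\leq d$, then $\Sigma$ enjoys the finiteness property of the JSR, with the length of the optimal product bounded by a function of $d$. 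The argument exploits the fact that the unit ball of such an extremal function has a bounded ``active structure'' (faces, active maximizing/minimizing quadratics, or critical points of the defining polynomial), and an extremal trajectory must visit this structure cyclically and hence be eventually periodic with bounded period, generalizing the Lagarias--Wang theorem highlighted in the abstract.

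Given these two principles, the theorem follows by matching each family to the property it violates. \emph{Part 1} uses Principle A combined with Kozyakin's result that the JSR in Family 1 is not a semialgebraic function of $k$: for each fixed $d$, the set $\{k:\rho((1{-}1/k)\Sigma_k)\leq 1\}$ cannot be semialgebraic of complexity $d$, so some $k$ must fail to admit a Lyapunov function of complexity $\leq d$. \emph{Part 2} uses Principle B combined with the Bousch--Mairesse / Blondel--Theys--Vladimirov counterexample to the finiteness conjecture: for $\alpha$ sufficiently close to $\alpha_*^-$ the optimal product of Family 2 has unbounded length, ruling out any bounded-complexity extremal Lyapunov function. \emph{Part 3} uses Principle B combined with Lagarias--Wang's explicit construction of arbitrarily long optimal products in Family 3: for each $d$, choosing $k$ large enough exceeds the product length bound.

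The main obstacle is Principle B in the polynomial case, where the unit ball is a smooth algebraic hypersurface rather than a combinatorial object. Here one must show that the polynomial of degree $\leq d$ defining the extremal Lyapunov function has at most a bounded number of ``critical orbits'' supporting extremal trajectories, and that these orbits correspond to finite products of length bounded in $d$. The polytopic and piecewise-quadratic cases are combinatorially more transparent: an extremal trajectory must cycle through the finitely many active faces or active quadratics in a bounded number of steps, directly yielding finiteness with a controlled product length. Establishing this bridge between the algebraic geometry of extremal sublevel sets and the combinatorial structure of extremal trajectories is the technical crux identified in the abstract.
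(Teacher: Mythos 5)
Your two principles are exactly the paper's two pillars (its Theorem on semialgebraicity of the set of matrix tuples admitting a complexity-$\le d$ certificate, via Tarski--Seidenberg, and its Theorem that an \emph{extremal} piecewise algebraic Lyapunov function forces the finiteness property with a product-length bound depending on $n$, $m$, $d$), and your handling of Parts 1 and 3 matches the paper's. But there are two genuine gaps in the execution. First, your justification of Principle B --- that an extremal trajectory ``must visit the active structure cyclically and hence be eventually periodic with bounded period'' --- is not a proof, and the mechanism is false as stated: returning to the same face or active quadratic does not make the symbolic itinerary of the matrix product periodic (the extremal trajectories of finiteness counterexamples are precisely aperiodic while remaining on a fixed algebraic hypersurface). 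The actual argument, following Lagarias--Wang, needs three nontrivial ingredients: a compactness argument producing a single point $x^*$ and an infinite word along which $V$ stays equal to $1$; a combinatorial extraction of a word infinitely many of whose prefixes recur later in the word; and the stabilization of the descending chain of algebraic sets $S_t=\{x:\ V(x)\le 1,\ p(A_{j_1}x)=0,\dots,p(A_{j_t}\cdots A_{j_1}x)=0\}$, which is where the degree bound enters. None of this is captured by ``bounded active structure.''

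Second, your Part 2 reduction assumes that the optimal product length of the family tends to infinity as $\alpha\to\alpha_*^-$. That does not follow from the mere fact that the set at $\alpha_*$ violates the finiteness property; it would require a quantitative description of the optimal products for $\alpha$ near $\alpha_*$ (the Sturmian/devil's-staircase structure), which you neither prove nor cite. Relatedly, Principle B applies to \emph{extremal} (non-strict, JSR equal to one) Lyapunov functions, whereas for $\alpha<\alpha_*$ the system is strictly stable and its Lyapunov functions are not extremal, so your principle does not apply to them directly. The paper bridges both issues at once with a compactness argument you are missing: assuming every $\alpha<\alpha_*$ admits a degree-$\le d$ Lyapunov function, normalize the coefficients, pass to a limit $V=\lim_{\alpha\to\alpha_*}V_\alpha$, verify that $V$ remains positive definite (this needs a separate lemma showing the normalized orbit $\{Ax/\|Ax\|\}$ of any nonzero $x$ is infinite, so a degree-$d$ polynomial cannot vanish on it), and only then apply the extremal-function theorem to the non-strict limit certificate for the set at $\alpha_*$ itself, contradicting the known failure of finiteness there.
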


The first construction and its relation to non-algebraicity is presented in Section \ref{section-algebraicity}. The second and third constructions are similar and their relations to the finiteness property are presented in Section \ref{section-finiteness}. 
In Section \ref{section-finiteness}, we present a result that is of potential interest independent of the above theorem: \aaa{that existence of an extremal \emph{piecewise algebraic} Lyapunov function implies the finiteness property of optimal products. Piecewise algebraic Lyapunov functions constitute a broad class of functions that includes all classes stated so far as a special case. By an \emph{extremal Lyapunov function}, we mean a Lyapunov function $V$ such that $$\sup_{x \in \re^n\setminus\{0\}, i =1,\dots,m}{ \frac{V(A_ix)}{V(x)}}= \rho,$$ where $\rho$ is the JSR.  See \cite[Section 2.1]{Raphael_Book} for more on the similar concept of extremal norms.}



\aaa{In a sense, this result links} lower and upper bound approaches for computation of the joint spectral radius. Similar results were obtained in the pioneering works of \cite{gu} for polytopic Lyapunov functions and \cite{LagariasWang_Finiteness_Conjecture} for quadratic Lyapunov functions, as well as, several other classes of \emph{convex} Lyapunov functions.

We shall also remark that for \emph{continuous time} switched linear systems, \cite{switch.common.poly.Lyap} have established that the degree of a polynomial Lyapunov function for an ASS system may be arbitrarily high, answering a question raised by Dayawansa and Martin. We have been unable to come up with a transformation from continuous time to discrete time that preserves both AAS and non-existence of polynomial Lyapunov functions of any desired degree.

In Section \ref{section-undecidability}, we provide an alternative proof of Theorem \ref{thm:Lyaps.blow.up.families} based on an undecidability results due to \cite{BlTi2}. While this will be a non-constructive argument, its implications will be stronger. Indeed, Theorem \ref{thm:Lyaps.blow.up.families} above implies that the complexity parameter $d$ (and hence the size of \rj{the} underlying LPs and SDPs) cannot be upper bounded as a function of $n$ and $m$ only. The undecidability results, however, imply that $d$ cannot be upper bounded even as a \rj{(computable)} function of $n$, $m$, and the entries of the input matrices.

\aaa{In Section~\ref{sec:numerical.example}, we numerically solve semidefinite programs to demonstrate the result of Theorem~\ref{thm:Lyaps.blow.up.families} with some concrete numbers. It is shown that as some parameter in the input matrices changes, the minimum required degree of a polynomial Lyapunov function indeed grows rapidly (all the way to 24 before we stop the experiment). We close our paper with some brief concluding remarks in Section~\ref{sec:conclusion}.}

\section{Complexity of Lyapunov functions and non-algebraicity}\label{section-algebraicity}



 One classical approach to demonstrate that a problem is hard is to establish that there is no \emph{algebraic criterion} for testing the property under consideration. This is formalized by showing that the set of instances of a given size that satisfy the property do not form a \emph{semialgebraic set} (see formal definition below). Such a result rules out the possibility of any characterization of the property at hand that only involves operations on the input data that include combinations of arithmetical operations (additions, subtractions, multiplications, and divisions), logical operations (``and'' and ``or''), and sign test operations (equal to, greater than, greater than or equal to,...); see~\cite{Blondel_simultaneous_3plants}. While this is a very strong statement, non-algebraicity does not imply (but is implied by) Turing undecidability, which will be our focus in Section~\ref{section-undecidability}. 
%
%
%
%
%
Nevertheless, non-algebraicity results alone are enough to show that the complexity of commonly used Lyapunov functions for switched linear systems cannot be bounded. The goal of this section is to formalize this argument.


\begin{definition}
A set $S\subset\mathbb{R}^n$ defined as $S=\{x\in\mathbb{R}^n:\ f_i(x)\rhd_i 0, i=1,\ldots,r\}$, where for each $i$, $f_i$ is a polynomial and $\rhd_i$ is one of $\geq,<,=,\neq$, is called a \emph{basic semialgebraic set}. A set is called \emph{semialgebraic} if it can be expressed as a finite union of basic semialgebraic sets.
\end{definition}

\begin{theorem}[\cite{Tarski_quantifier_elim}, \cite{Seidenberg_quantifier_elim}] \label{thm:Tarski.Seidenberg}
Let $S\subset\mathbb{R}^{k+n}$ be a semialgebraic set and $\pi: \mathbb{R}^{k+n}\rightarrow\mathbb{R}^n$ be a projection map that sends $(x,y)\mapsto x$. Then $\pi(S)$ is a semialgebraic set in $\mathbb{R}^n$.
\end{theorem}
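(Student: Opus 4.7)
The plan is to prove the Tarski--Seidenberg theorem by induction on the number of coordinates being projected out, which reduces the problem to a single-variable projection: showing that if $S\subset\mathbb{R}^{n+1}$ is semialgebraic with coordinates $(x,y)$ where $x\in\mathbb{R}^n$ and $y\in\mathbb{R}$, then $\pi(S)=\{x\in\mathbb{R}^n:\exists y\in\mathbb{R},\ (x,y)\in S\}$ is semialgebraic. Since the semialgebraic class is closed under finite unions and projection commutes with union, I would further reduce to the case where $S$ is a basic semialgebraic set defined by finitely many sign conditions $p_i(x,y)\rhd_i 0$, with each $p_i$ a polynomial and each $\rhd_i\in\{\geq,<,=,\neq\}$.

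For such a basic semialgebraic $S$, the question becomes: given $p_1,\ldots,p_r\in\mathbb{R}[x,y]$ and prescribed signs, characterize those $x\in\mathbb{R}^n$ for which the univariate system in $y$, with coefficients that depend polynomially on $x$, admits a real solution. The central observation is that for any fixed tuple of univariate polynomials, the existence of a real $y$ realizing a prescribed sign pattern is determined by purely algebraic data extracted from their coefficients, namely by resultants, subresultant principal coefficients, and the signs at $\pm\infty$ or at common zeros. I would invoke Sturm's theorem and the theory of subresultants to express the number of real roots of any univariate $q(y)$, and the sign of another polynomial at such roots, as a Boolean combination of sign conditions on the coefficients of $q$.

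With that machinery in place, the strategy is to view the coefficients of each $p_i(x,\cdot)$ as polynomial functions of $x$ and to write down, through a case analysis on the sign patterns of leading coefficients and pairwise resultants, a quantifier-free formula in $x$ that is equivalent to the existence of a $y$ realizing the required signs on the $p_i(x,y)$. Because Boolean combinations of polynomial sign conditions on $x$ define semialgebraic sets, this yields the desired semialgebraic description of $\pi(S)$, and the induction on the number of projected coordinates completes the proof.

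The main obstacle, and by far the technically heaviest part, is precisely this case analysis: establishing rigorously and uniformly (in the coefficients of the $p_i$, viewed as polynomials in $x$) that the existence of a real $y$ with a prescribed sign pattern is equivalent to a Boolean combination of polynomial sign conditions on the $x$-dependent coefficients. A clean organizational device for this step is the cylindrical algebraic decomposition, which constructs a finite stratification of $\mathbb{R}^n$ into cells on which all the relevant polynomials in $y$ exhibit constant root structure; the projection $\pi(S)$ is then the union of those cells on which some admissible $y$ exists, each cell being semialgebraic by construction.
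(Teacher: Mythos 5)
The paper does not prove this statement at all: it is quoted as a classical result with citations to Tarski and Seidenberg, so there is no ``paper's proof'' to compare against. Your outline follows the standard textbook route (induction on the number of eliminated coordinates, reduction to a single projected variable, reduction to basic semialgebraic sets via commutation of projection with finite unions, and then a uniform sign-determination argument in the coefficients). That overall architecture is correct, and you correctly identify the two places where care is needed: the degree of $p_i(x,\cdot)$ in $y$ can drop as $x$ varies, forcing a case split on the vanishing of leading coefficients, and the Sturm-sequence data must be replaced by subresultant coefficients precisely because Euclidean remainders are not polynomial in the coefficients.

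The genuine gap is that the entire mathematical content of the theorem is concentrated in the step you defer. What must be proved is a uniform statement of the following kind: for fixed degrees, the set of coefficient tuples $(c_1,\ldots,c_r)$ of univariate polynomials $q_1,\ldots,q_r$ for which there exists $y\in\mathbb{R}$ realizing a prescribed sign vector $(\mathrm{sign}\,q_1(y),\ldots,\mathrm{sign}\,q_r(y))$ is a Boolean combination of polynomial sign conditions in the $c_j$. Your proposal names the tools (Sturm--Tarski queries, subresultants, signs at $\pm\infty$) but does not carry out the argument: in particular you do not state or prove the Sturm--Tarski theorem in the form that computes $\sum_{q(y)=0}\mathrm{sign}\,p(y)$ from sign variations of a (sub)resultant sequence, nor the linear-algebra step that recovers realizable sign conditions from these Tarski queries for several polynomials simultaneously. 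Invoking cylindrical algebraic decomposition as an ``organizational device'' does not close this gap, since the correctness of the CAD projection operator is itself proved by exactly the subresultant analysis you are omitting; as written, that paragraph is circular. As a plan the proposal is sound, but as a proof it establishes only the easy reductions and leaves the theorem's core unproven.
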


We start by presenting two examples of semialgebraic sets that are relevant for our purposes.

\begin{lemma}\label{lem:stable.matrices.semialgebraic}
The set $\mathfrak{S}_n$ of stable $n \times n$ real matrices (i.e., those with spectral radius less than one), when viewed as a subset of $\mathbb{R}^{n^2}$, is semialgebraic.
\end{lemma}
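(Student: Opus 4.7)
The plan is to express $\mathfrak{S}_n$ as the complement (in $\re^{n^2}$) of a projection of a basic semialgebraic set, and then invoke the Tarski--Seidenberg theorem (Theorem~\ref{thm:Tarski.Seidenberg}) to conclude. Concretely, I would work with the complement $\mathfrak{S}_n^c$, i.e., matrices having at least one eigenvalue of modulus $\geq 1$, and write the eigenvalue condition using real and imaginary parts so as to avoid leaving the real numbers.

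The key step is to observe that a matrix $A \in \re^{n \times n}$ has an eigenvalue $\lambda = a + bi$ with $a^2+b^2 \geq 1$ if and only if there exist $a,b \in \re$ and vectors $u,w \in \re^n$ with $(u,w) \neq (0,0)$ such that
\[
Au = au - bw, \qquad Aw = bu + aw, \qquad a^2 + b^2 \geq 1.
\]
(The ``only if'' direction comes from writing a complex eigenvector as $u+iw$ and splitting $A(u+iw) = (a+bi)(u+iw)$ into real and imaginary parts; the ``if'' direction produces a complex eigenvector by the same formula, noting that $u+iw \neq 0$ iff $(u,w) \neq (0,0)$.) Define
\[
T \mathrel{\mathop:}= \bigl\{ (A,a,b,u,w) \in \re^{n^2+2+2n} : Au - au + bw = 0,\ Aw - bu - aw = 0,\ \|u\|^2 + \|w\|^2 > 0,\ a^2 + b^2 \geq 1 \bigr\}.
\]
Each of the defining conditions is a polynomial equation or inequality in the coordinates, so $T$ is a basic semialgebraic set.

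Next, let $\pi : \re^{n^2 + 2 + 2n} \to \re^{n^2}$ be the projection onto the $A$-coordinates. By the equivalence above, $\pi(T) = \mathfrak{S}_n^c$. Applying Theorem~\ref{thm:Tarski.Seidenberg}, $\mathfrak{S}_n^c$ is semialgebraic, and since semialgebraic sets are closed under complementation, so is $\mathfrak{S}_n$.

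There are no real obstacles here; the only thing to be slightly careful about is the faithful translation from ``some complex eigenvalue has modulus $\geq 1$'' to polynomial (in)equalities over $\re$, which is handled by the real/imaginary decomposition above. As an alternative route one could note that the coefficients of the characteristic polynomial of $A$ are polynomials in the entries of $A$ and then invoke any algebraic Schur stability test (e.g.\ the Schur--Cohn or Jury criterion) to give an explicit basic semialgebraic description of $\mathfrak{S}_n$; I would mention this only as a remark, since the Tarski--Seidenberg argument is both shorter and in the same spirit as the rest of this section.
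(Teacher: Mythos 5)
Your proof is correct, but it takes a genuinely different route from the paper's. The paper stays entirely on the ``Lyapunov side'': it characterizes $\mathfrak{S}_n$ as the set of $A$ for which there exists $P=P^T$ with $P\succ 0$ and $P-A^TPA\succ 0$, eliminates the universally quantified $x$ by replacing positive definiteness with positivity of the $n$ leading principal minors (so the set of pairs $(A,P)$ is basic semialgebraic), and then projects out $P$ via Theorem~\ref{thm:Tarski.Seidenberg}. You instead describe the \emph{complement} of $\mathfrak{S}_n$ through the eigenvalue equations, splitting a complex eigenvector into real and imaginary parts, projecting out the eigendata, and finally using closure of semialgebraic sets under complementation (a standard fact, though one the paper never states explicitly, so it is worth a one-line justification from the definition). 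Both arguments are sound. What the paper's version buys is that it is exactly the template that Theorem~\ref{thm:matrix.families.with.Lyaps.are.semialgebraic} later generalizes --- replacing the quadratic $x^TPx$ by a degree-$d$ polynomial Lyapunov function --- which is why the lemma is proved that way; your eigenvalue argument does not extend to the multi-matrix switched setting, where no spectral characterization of stability is available. What your version buys is that it is more elementary and self-contained for the single-matrix case (no positive-definiteness-via-minors step), and your closing remark is apt: the Schur--Cohn/Jury criterion applied to the characteristic polynomial would even give an explicit quantifier-free basic semialgebraic description of $\mathfrak{S}_n$, with no appeal to Tarski--Seidenberg at all.
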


\begin{proof}
An equivalent characterization of stable matrices is via quadratic Lyapunov functions: 
\begin{equation}\label{eq:set.of.stable.matrices}
\begin{array}{llll}
\mathfrak{S}_n&=&\{A:&  \exists P=P^T \ \mbox{s.t.}\\ \ &\ &\ &  x^TPx>0, \forall x\neq 0, \\ \ &\ & \ & x^T(P-A^TPA)x>0, \forall x\neq 0 \}.
\end{array}
\end{equation} 
Let $\mathfrak{T}_n$ be the set of $n\times n$ matrices $A$ and $P$ that satisfy these constraints; i.e., $P$ and $P-A^TPA$ both positive definite. The set $\mathfrak{T}_n$ is semialgebraic (in fact basic semialgebraic). One way to see this is to note that the variables $x$ in (\ref{eq:set.of.stable.matrices}) can be eliminated: Since a matrix is positive definite if and only if its $n$ leading principal minors are positive, we can describe the set $\mathfrak{T}_n$ by $2n$ strict polynomial inequalities (in variables that are the entries of $P$ and $A$). The set $\mathfrak{S}_n$ is then the projection of $\mathfrak{T}_n$ onto the space of the $A$ variables. Hence, by Theorem~\ref{thm:Tarski.Seidenberg}, $\mathfrak{S}_n$ is semialgebraic.
\end{proof}

\begin{lemma}\label{lem:nonnegative.polys.semialgebraic}
The set $\mathcal{P}_{n,d}$ of nonnegative polynomials in $n$ variables and (even) degree $d$ is semialgebraic.
\end{lemma}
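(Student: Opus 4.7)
The plan is to realize $\mathcal{P}_{n,d}$ as the complement of a projection of a basic semialgebraic set, exactly in the spirit of the proof of Lemma~\ref{lem:stable.matrices.semialgebraic}. Parametrize a polynomial $p$ of degree $d$ in $n$ variables by its coefficient vector $c \in \mathbb{R}^N$, where $N=\binom{n+d}{d}$ (or the analogous count if one restricts to forms of degree exactly $d$), so that $\mathcal{P}_{n,d}$ becomes a subset of $\mathbb{R}^N$. Write $p_c(x)$ for the polynomial with coefficient vector $c$ evaluated at $x\in\mathbb{R}^n$. The key observation is that the map $(c,x)\mapsto p_c(x)$ is itself a polynomial in the joint variables $(c,x)$, since each coefficient appears linearly, multiplied by a fixed monomial in $x$.

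Next I would introduce the auxiliary set
$$T=\{(c,x)\in\mathbb{R}^N\times\mathbb{R}^n:\ p_c(x)<0\},$$
which is basic semialgebraic, being defined by a single strict polynomial inequality. Let $\pi:\mathbb{R}^N\times\mathbb{R}^n\to\mathbb{R}^N$ denote the projection onto the coefficient space. By Theorem~\ref{thm:Tarski.Seidenberg}, $\pi(T)$ is semialgebraic; and by construction $\pi(T)$ consists of exactly those $c$ for which there exists some $x$ with $p_c(x)<0$, which is precisely the complement $\mathbb{R}^N\setminus\mathcal{P}_{n,d}$.

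To finish, I would invoke closure of the semialgebraic class under complementation: the complement of any basic semialgebraic set is a finite union of basic semialgebraic sets (obtained by negating each of the defining sign conditions), and finite unions and intersections preserve semialgebraicity. Hence $\mathcal{P}_{n,d}=\mathbb{R}^N\setminus\pi(T)$ is semialgebraic. The only non-routine ingredient is closure under complementation, which is a standard structural fact accompanying Tarski--Seidenberg; I would cite it rather than re-derive it. In short, the argument is a direct quantifier-elimination exercise and presents no genuine obstacle once the theorem of Tarski and Seidenberg is in hand.
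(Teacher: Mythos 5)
Your proof is correct and is essentially the first of the two routes the paper sketches: the paper's one-line appeal to ``quantifier elimination techniques'' is exactly your reduction of the universal quantifier to a Tarski--Seidenberg projection (of the set where $p_c(x)<0$) followed by a complement, and closure of semialgebraic sets under complementation is indeed the standard structural fact needed to finish. (The paper's alternative second route, via sums of squares of rational functions with a degree bound $\hat d(n,d)$, gives a purely existential description and so avoids complementation altogether, but your argument is complete as it stands.)
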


\begin{proof}
This is a standard fact in algebra; see e.g.~\cite{PabloGregRekha_BOOK}. A polynomial $p$ is by definition nonnegative if $$\forall x_1\forall x_2\ldots\forall x_n\ p(x_1,\ldots,x_n)\geq 0.$$ One can apply quantifier elimination techniques (see e.g.~\cite{QE_book}) to eliminate the quantified variables $(x_1,\ldots,x_n)$ and obtain a description of $\mathcal{P}_{n,d}$ as a semialgebraic set in terms of the coefficients of $p$ only. Another approach is to resort to the representation of nonnegative polynomials as sums of squares of rational functions (\`a la Hilbert's 17th problem~\cite{Reznick}) and note the equivalent description:

\begin{equation}\nonumber
\begin{array}{llll}
\mathcal{P}_{n,d}&=&\{p:&  \exists q\in\mathbb{R}[x] \    \mbox{of degree}\  \hat{d}=\hat{d}(n,d)\ \mbox{s.t.} \\ \ &\ &\ & q \ \mbox{is sos,} \\ \ &\ & \ &  pq \ \mbox{is sos} \}.
\end{array}
\end{equation}  By definition of the sum of squares decomposition, the last two conditions can be written as polynomial equations in the coefficients of the polynomials $p$, $q$, and the coefficients of the new polynomials that are being squared. Hence, the result is a semialgebraic set. The set $\mathcal{P}_{n,d}$ is the projection of this set onto the space of coefficients of $p$ and is therefore semialgebraic.
%
\end{proof}

Unlike the case of stable matrices (Lemma~\ref{lem:stable.matrices.semialgebraic}), when we move to switched systems defined by even only two matrices, the set of stable systems no longer defines a semialgebraic set. This is a result of \cite{Koz90}.  The result is stated in terms of the \emph{joint spectral radius} (see \cite{Raphael_Book} for a monograph on the topic) which captures the stability of a linear switched system\footnote{\rj{See \cite{jsr-toolbox} for a recent software toolbox for computation/approximation of the Joint Spectral Radius.}}.
	\begin{definition}(\cite{RoSt60})\label{def:jsr} If
$\|\cdot\|$ is any matrix norm, \rj{for any integer $k,$ consider 
$$\rho_k(\Sigma):=\sup_{A_i\in \Sigma}\{ \|A_1\dots A_k\|^{1/k}\}.$$ }The {\em joint spectral radius} (JSR) of $\cM$ is
\begin{equation}\label{eq:JSR}
\rho(\Sigma)=\lim_{k\rightarrow \infty} \
\rho_k(\Sigma).
\end{equation}
\end{definition}
The joint spectral radius does not depend on the matrix norm chosen
thanks to the equivalence between matrix norms in finite dimensional
spaces. It is well known that the switched system in (\ref{eq:switched.linear.sys}) is absolutely asymptotically stable if and only if $\rho(\Sigma)< 1.$

\begin{theorem}[\cite{Koz90}; see also~\cite{Theys_thesis}]\label{thm:Kozyakin}
The\\ set of $2\times 2$ matrices $A_1,A_2$ with $\rho(A_1,A_2)<1$ is not semialgebraic. 
\end{theorem}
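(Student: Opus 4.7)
The plan is to argue by contradiction using the Tarski--Seidenberg projection theorem (Theorem~\ref{thm:Tarski.Seidenberg}) together with the structure theorem for semialgebraic subsets of $\mathbb{R}$: every such set is a finite union of points and open intervals, hence has only finitely many connected components. So suppose, for contradiction, that
\[
\mathfrak{R}_2 := \{(A_1,A_2)\in\mathbb{R}^{8} : \rho(A_1,A_2)<1\}
\]
is semialgebraic. I would then cook up a one-parameter polynomial (or rational) curve $\gamma:\mathbb{R}\to\mathbb{R}^{8}$, $\gamma(t)=(A_1(t),A_2(t))$, and use the fact that $\gamma^{-1}(\mathfrak{R}_2)$ is semialgebraic in $\mathbb{R}$ (as the preimage of a semialgebraic set under a polynomial map) to derive a contradiction by exhibiting infinitely many sign changes of $\rho(\gamma(t))-1$ on a bounded interval of parameters~$t$.

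The curve $\gamma(t)$ I would use is essentially the one displayed in part~(1) of Theorem~\ref{thm:Lyaps.blow.up.families}, but with $t$ allowed to vary continuously on $(0,1)$ rather than restricted to the discrete values $\sin(2\pi/(2k+1))$. The second matrix is (up to the scalar $(1-t^4)$) a planar rotation $R_{\alpha(t)}$ with $\sin\alpha(t)=t$, while the first matrix is a rank-one matrix whose image and kernel are positioned so that, under iteration together with powers of $R_{\alpha(t)}$, the product along the closed orbit of the rotation becomes extremal. The key computational step is to analyze, for each $t$, the spectral radius of the specific product
\[
P_k(t) = A_1(t)\,A_2(t)^{k},
\]
(or more generally products that thread singular applications of $A_1$ through full rotation cycles of $A_2$) and show that: (a) when $\alpha(t)/2\pi$ is the rational $1/(2k+1)$, one has $\rho(P_k(t))=1$ so $\rho(A_1(t),A_2(t))\geq 1$; while (b) on carefully chosen intervals between these rational angles the scaling $(1-t^4)$ and $1/(1-3\pi t^3/2)$ pull the JSR strictly below one. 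Because the values $t_k=\sin(2\pi/(2k+1))$ accumulate at $0$, this produces infinitely many disjoint connected components of $\gamma^{-1}(\mathfrak{R}_2)$ in any neighborhood of $t=0$, contradicting semialgebraicity.

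First, I would carry out the product-computation (a) explicitly: for $\alpha$ a rational multiple of $2\pi$, the rotation $A_2$ has finite order modulo scaling, which makes the spectrum of $P_k(t_k)$ directly computable and, thanks to the chosen normalizing factors, exactly equal to $1$. Next, I would establish (b) by producing a common quadratic (or more generally norm-based) Lyapunov certificate that works uniformly on open intervals of parameters lying strictly between successive $t_k$'s, using continuity of $\rho$ together with the fact that $\rho$ is submultiplicative and bounded by any operator norm. The combination of (a) and (b) means the one-variable set $\gamma^{-1}(\mathfrak{R}_2)$ alternates infinitely often between being inside and outside~$\mathfrak{R}_2$ as $t\to 0^{+}$, producing infinitely many connected components.

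I expect the main obstacle to be step~(b)---the uniform JSR upper bound on the off-resonance intervals. Getting an \emph{exact} handle on the JSR of a two-matrix set is already delicate, and what is needed here is a monotonicity/continuity argument saying that slightly perturbing the rotation angle away from $2\pi/(2k+1)$ together with the contractive prefactor $(1-t^4)$ drops $\rho$ strictly below~$1$. One way I would try to make this rigorous is to exhibit for each $k$ an explicit extremal norm (polytopic, by Barabanov's theorem applied to the finite-order rotation case) at the resonant parameter $t_k$ and then perturb it, appealing to upper semicontinuity of $\rho$ and the specific algebraic form of the scaling factors that were engineered precisely to make this perturbation strictly contractive. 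Once this is in place, the contradiction with the finiteness of connected components of a one-dimensional semialgebraic set completes the proof.
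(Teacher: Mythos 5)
Your high-level strategy is the same as the paper's: restrict to Kozyakin's one-parameter family, exhibit infinitely many alternations between $\rho<1$ and $\rho\geq 1$ accumulating at $t=0$, and contradict the fact that a one-dimensional semialgebraic set is a finite union of points and intervals. (You are in fact more explicit than the paper about why the contradiction follows, namely by pulling the stability set back along the semialgebraic curve $\gamma$ rather than asserting directly that the eight-dimensional set has infinitely many components; that part of your write-up is fine.)

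The gap is in your concrete claims (a) and (b), which do not match the actual behavior of Kozyakin's family. The paper, following \cite{Koz90}, states that the JSR is \emph{less} than one at $t=\sin\frac{2\pi}{2k+1}$ and \emph{greater} than one at $t=\sin\frac{2\pi}{2k}$ (both for $k$ large); the alternation is between the odd-denominator and even-denominator resonant angles, both of which are rational multiples of $2\pi$. Your claim (a) --- that $\rho\geq 1$ at $t=\sin\frac{2\pi}{2k+1}$ because a product such as $A_1(t)A_2(t)^k$ has spectral radius exactly one --- asserts the opposite of what is true at those parameters; indeed, the whole point of part (1) of Theorem~\ref{thm:Lyaps.blow.up.families} is that the family at the odd-denominator values (shrunk by $(1-\frac1k)$) is asymptotically stable under arbitrary switching. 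Your claim (b) is not merely the ``main obstacle'' you identify; it is false: for large $k$ the interval between the consecutive odd-denominator points $\sin\frac{2\pi}{2k+1}$ and $\sin\frac{2\pi}{2k-1}$ contains the point $\sin\frac{2\pi}{2k}$, at which the JSR exceeds one, so no common Lyapunov certificate can exist uniformly on those intervals. The dichotomy ``resonant versus off-resonance'' is therefore the wrong one; the correct dichotomy is the parity of the denominator of the rotation angle over $2\pi$, and the genuinely hard content of the theorem (which the paper itself does not reproduce, deferring to \cite{Koz90}) is the verification of the two opposite spectral-radius estimates at these two interlacing families of resonant parameters, for which the normalizing factors $(1-t^4)$ and $(1-3\pi t^3/2)^{-1}$ were engineered.
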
 

\begin{proof}
The proof of Kozyakin is established by showing that the family of matrices
 $$A_1=\frac{(1-t^4)}{(1-3\pi t^3/2)}\begin{bmatrix} {\sqrt{1-t^2}} & -{t} \\ 0 & 0\end{bmatrix},$$ $$A_2=(1-t^4)\begin{bmatrix} \sqrt{1-t^2} & -t \\ t & \sqrt{1-t^2}\end{bmatrix},$$ have JSR less than one for  $t=\sin\frac{2\pi}{2k+1}$ (for $k\in \n$ large enough), and JSR more than one for $t=\sin\frac{2\pi}{2k}$ (for $k\in \n$ large enough). Hence, the stability set has an infinite number of disconnected components and therefore cannot be semialgebraic. 
\end{proof}

We now show that by contrast, for any integer $d$, the set of matrices $\{A_1,\ldots,A_m \}$ that admit a common polynomial Lyapunov function of degree $\leq d$ is in fact semialgebraic. This establishes the result related to the first construction in Theorem~\ref{thm:Lyaps.blow.up.families}.

\begin{theorem}\label{thm:matrix.families.with.Lyaps.are.semialgebraic}
For any positive integer $d$, the set of matrices $\{A_1,\ldots,A_m\}$ (viewed as a subset of $\mathbb{R}^{mn^2}$) that admit either (i) a polynomial Lyapunov function of degree $\leq d$, or (ii) a polytopic Lyapunov function with $\leq d$ facets, or (iii) a piecewise quadratic Lyapunov function (in form of max-of-quadratics or min-of-quadratics) with $\leq d$ pieces is semialgebraic.
\end{theorem}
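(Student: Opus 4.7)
The plan is to express each of the three sets as the solution set of a first-order formula over the reals whose atoms are polynomial (in)equalities in the entries of $A_1, \ldots, A_m$ and a small collection of auxiliary variables parameterizing the candidate Lyapunov function. Semialgebraicity then follows from Tarski--Seidenberg (Theorem~\ref{thm:Tarski.Seidenberg}) applied once per quantifier, together with closure of semialgebraic sets under Boolean combinations (which handles universal quantifiers by negation). In each case the auxiliary variables will be existentially quantified and then projected out to leave a semialgebraic condition on the input matrices alone.

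For case (i), the auxiliary variable is the coefficient vector of a form $p$ of degree $\leq d$. The set of $(p, A_1, \ldots, A_m)$ for which $p$ is positive definite and every ``drop polynomial'' $p(x) - p(A_i x)$ is positive definite is semialgebraic: by Lemma~\ref{lem:nonnegative.polys.semialgebraic}, nonnegativity of a form is a semialgebraic condition on its coefficients, and positive definiteness of a form of even degree $d$ is captured by $\exists\, \epsilon > 0 : p(x) - \epsilon (x_1^2+\cdots+x_n^2)^{d/2} \geq 0\ \forall x$, again semialgebraic after projecting out $\epsilon$. Projecting out the coefficients of $p$ then yields the set in (i).

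For case (ii), take as auxiliary variables $c_1, \ldots, c_d \in \mathbb{R}^n$. The function $V(x) = \max_i |c_i^T x|$ is a polytopic Lyapunov function iff the $c_i$'s span $\mathbb{R}^n$ (a semialgebraic rank condition asserting that some $n \times n$ minor of the matrix of $c_i$'s is nonzero) and there is $\lambda \in [0,1)$ with $V(A_j x) \leq \lambda V(x)$ for all $x$ and $j$. Rewriting $\max_i (c_i^T x)^2 \geq (c_k^T A_j x)^2/\lambda^2$ as ``$\exists i$'', the Lyapunov inequality becomes the first-order formula $\exists\,\lambda \in [0,1)\; \forall x\; \forall j\; \forall k\; \exists i \colon \lambda^2 (c_i^T x)^2 \geq (c_k^T A_j x)^2$, which is semialgebraic in $(A,c)$; projecting out $c$ closes this case. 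For case (iii), let the auxiliary variables be symmetric matrices $Q_1, \ldots, Q_d$; positive definiteness of each $Q_i$ is semialgebraic via the leading-principal-minors characterization used in Lemma~\ref{lem:stable.matrices.semialgebraic}. For max-of-quadratics, the inequality $\max_i x^T A_j^T Q_i A_j x \leq \lambda \max_k x^T Q_k x$ rewrites as $\forall i\; \exists k \colon \lambda\, x^T Q_k x \geq x^T A_j^T Q_i A_j x$, and for min-of-quadratics the quantifier order reverses to $\exists i\; \forall k \colon \lambda\, x^T Q_k x \geq x^T A_j^T Q_i A_j x$; either way one obtains a first-order formula, and projecting out the $Q_i$'s gives a semialgebraic set.

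The only point requiring real care is correctly unfolding the inner max (resp.\ min) inside the Lyapunov condition into an $\exists$ (resp.\ $\forall$) with the appropriate direction of inequality, and in the piecewise-quadratic case keeping track of the different quantifier orderings for the max and min variants. Past this bookkeeping, the whole argument is driven by Tarski--Seidenberg together with standard semialgebraic descriptions (positive definiteness via principal minors, full rank via a non-vanishing minor), and I do not anticipate a serious technical obstacle. Notably, it is never used---or needed---that any of these Lyapunov classes be \emph{necessary} for stability; all that matters is that admissibility of a function from the class is cut out by a polynomial formula with quantifiers.
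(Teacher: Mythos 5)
Your proposal is correct and follows essentially the same route as the paper: parameterize the candidate Lyapunov function by auxiliary real variables, observe that admissibility is a first-order condition over the reals (hence semialgebraic by quantifier elimination / Tarski--Seidenberg), and project out the auxiliary variables. The paper only writes out case (i) explicitly (using Lemma~\ref{lem:nonnegative.polys.semialgebraic} and a union over degrees $\hat{d}\le d$, which also takes care of the homogeneity-degree bookkeeping in your $\epsilon$-based positive-definiteness test) and declares (ii)--(iii) ``very similar''; your unfolding of the max/min into finite $\exists/\forall$ over indices is exactly the detail the paper omits.
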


\begin{proof}
We prove the claim only for polynomial Lyapunov functions. The proof of the other claims are very similar.  
The goal is to show that the set
\begin{equation}\label{eq:set.of.matrices.admitting.common.Lyap.degree.less.d}
\begin{array}{llll}
\mathfrak{T}_{n,m}&\mathrel{\mathop:}=&\{A_1,\ldots,A_m:&  \exists p\mathrel{\mathop:}=p(x),\ degree(p)\leq d, \ \mbox{s.t.}\\ \ &\ &\ &  p(x)>0, \forall x\neq 0, \\ \ &\ & \ & p(x)-p(A_ix)>0, \forall x\neq 0, \\ \ &\ & \ & \forall i=1,\ldots,m \}
\end{array}
\end{equation} 
is semialgebraic. 
Our argument is a simple generalization of that of Lemma~\ref{lem:stable.matrices.semialgebraic} and relies on Lemma~\ref{lem:nonnegative.polys.semialgebraic}. It follows from the latter lemma that for each $\hat{d}\in\{2,\ldots,d\}$, the set of polynomials $p$ of degree $\hat{d}$ and matrices $\{A_1,\ldots,A_m\}$ that together satisfy the constraints in (\ref{eq:set.of.matrices.admitting.common.Lyap.degree.less.d}) is semialgebraic. (Note that these are sets in the coefficients of $p$ and the entries of the matrices $A_i$, as the variables $x$ are being eliminated.) The union of these sets is of course also semialgebraic. By the Tarski--Seidenberg theorem (Theorem~\ref{thm:Tarski.Seidenberg}), once we project the union set onto the space of variables in $A_i$, we still obtain a semialgebraic set. Hence, $\mathfrak{T}_{n,m}$ is semialgebraic.
\end{proof}

\section{Complexity of Lyapunov functions and the finiteness property of optimal products}\label{section-finiteness}

A set of matrices $\{A_1,\ldots,A_m\}$ satisfies the \emph{finiteness property} if its JSR is achieved as the spectral radius of a finite product; i.e., if $$\rho(A_1,\ldots,A_m)=\rho^{1/k}(A_{\sigma_k}\ldots A_{\sigma_1}),$$ for some $k$ and some $(\sigma_k,\ldots,\sigma_1)\in\{1,\ldots,m\}^k$. The matrix product $A_{\sigma_k}\ldots A_{\sigma_1}$ that achieves the JSR is called the \emph{optimal product} and generates the ``worst case trajectory'' of the switched system in (\ref{eq:switched.linear.sys}). The finiteness conjecture of \cite{LagariasWang_Finiteness_Conjecture} (see also \cite{Gur95}, where the conjecture is attributed to Pyatnitskii) asserts that all sets of matrices have the finiteness property. The conjecture was disproved in 2002 by \cite{FP_disproved_Tetris} with alternative proofs consequently appearing in~\cite{finiteness_conjec_counterexample},~\cite{Kozyakin_dynamical_counterexample_FP}, and~\cite{FP_explicit_counterexample}. In particular, the last reference provided the first explicit counterexample only recently. It is currently not known whether all sets of matrices with \emph{rational} entries satisfy the finiteness property~(\cite{jungers-blondel-finiteness}).

\rj{Gurvits has shown} \cite{Gur95} that if the set of matrices admits a polytopic Lyapunov function, then the finiteness property holds. The result \rj{has been generalized by Lagarias and Wang} \cite{LagariasWang_Finiteness_Conjecture} to Lyapunov functions that take the form of various other norms, including ellipsoidal norms. \aaa{For example, their result has the following implication for quadratic Lyapunov functions.}

\begin{theorem}[\cite{LagariasWang_Finiteness_Conjecture}] \label{thm:ellipse.then.finite}
The finiteness property holds for any set of $n\times n$ matrices $\{A_1,\ldots,A_m\}$ {of JSR equal to one} that share an ellipsoidal norm, i.e., satisfy $A_i^TPA_i\preceq P$ for some symmetric positive definite matrix $P$. Moreover, the length of the optimal product is upper bounded by a quantity that depends on $n$ and $m$ only.
\end{theorem}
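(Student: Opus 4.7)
My plan is to prove this by first normalizing so that the $A_i$ become Euclidean contractions, then extracting an extremal element in the closure of the generated semigroup, and finally turning it into an honest finite product via a pigeonhole/covering argument on a compact Lie group of orthogonal matrices.

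\emph{Normalization.} Since the JSR and spectral radii of products are invariant under a common similarity, I would conjugate by $P^{1/2}$ to replace $A_i$ with $B_i := P^{1/2} A_i P^{-1/2}$. The hypothesis $A_i^T P A_i \preceq P$ becomes $B_i^T B_i \preceq I$, so each $B_i$ is a Euclidean contraction ($\|B_i\|_{op} \leq 1$). From now on I may assume without loss of generality that every $A_i$ is a contraction and $\rho(\Sigma) = 1$.

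\emph{Extremal element in the closure.} I would then study the closure $\bar S$ of the semigroup $S$ generated multiplicatively by the $A_i$. Since $S$ lies inside the operator-norm unit ball, $\bar S$ is compact. By the Berger--Wang formula combined with upper semicontinuity of the spectral radius, there exists $B^* \in \bar S$ with $\rho(B^*) = \rho(\Sigma) = 1$. The combination $\|B^*\|_{op} \leq 1$ and $\rho(B^*) = 1$ forces every unit-modulus eigenvalue of $B^*$ to be semisimple and its generalized eigenspace to be orthogonal to the strictly contracting part, so $\mathbb{R}^n$ splits orthogonally as $V \oplus V^\perp$, with $B^*|_V \in O(V)$ and $\rho(B^*|_{V^\perp}) < 1$; in particular $V \neq \{0\}$.

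\emph{From $\bar S$ to $S$ via compact-group pigeonhole.} The heart of the argument is to convert $B^* \in \bar S$ into a genuine finite product $B \in S$ with $\rho(B) = 1$. The decisive observation is that for any $B \in S$, the restriction $B|_V$ is a contraction on $V$, and whenever $B|_V$ happens to be orthogonal, $B$ has a unit-circle eigenvalue and hence $\rho(B) = 1$. I would then apply a covering/pigeonhole argument on the compact Lie group $O(V)$, whose dimension is at most $\binom{n}{2}$: among all products of length $\leq L$ for some $L = L(n,m)$ large enough, two of them, say $B_\alpha$ and $B_\beta$ with $\alpha$ a prefix of $\beta$, must have $V$-restrictions within prescribed distance $\varepsilon$; the corresponding suffix product then has its $V$-restriction $\varepsilon$-close to the identity. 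A closedness/limit argument inside the compact group (using that the $V$-restrictions of $S$ form a closed sub-semigroup, hence a subgroup, of $O(V)$) upgrades this approximation to an actual finite product $B \in S$ whose $V$-restriction is genuinely orthogonal, giving $\rho(B) = 1$. The same pigeonhole quantification yields the length bound $L$ depending only on $n$ (through $\dim O(V)$) and $m$ (through the branching factor of the product tree).

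\emph{Main obstacle.} The delicate step is the last one: converting closure-level existence into semigroup-level existence with an explicit length bound. The ellipsoidal hypothesis is decisive here, since it confines the asymptotic dynamics of $S$ to a compact Lie group acting on the invariant subspace $V$. Without this structure, pigeonhole arguments would only produce approximately orthogonal products; with it, the argument parallels Gurvits's treatment of polytopic norms \cite{gu} and delivers both the finiteness property and a length bound depending only on $n$ and $m$.
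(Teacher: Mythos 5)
Your normalization step is correct, and so is the spectral analysis of a single Euclidean contraction $B^*$ with $\rho(B^*)=1$ (orthogonal unit-modulus eigenspaces, orthogonal splitting $V\oplus V^{\perp}$). But there are two genuine gaps. First, the existence of $B^*\in\bar S$ with $\rho(B^*)=1$ does not follow from Berger--Wang plus semicontinuity of the spectral radius: Berger--Wang produces products $C_k$ of length $n_k\to\infty$ with $\rho(C_k)^{1/n_k}\to 1$, which is perfectly compatible with $\rho(C_k)\to 0$, so a limit point of the $C_k$ may have spectral radius strictly less than one. (The conclusion is true, but it needs the contraction structure---e.g., the nested isometry subspaces $\ker(I-P_k^TP_k)$ along an infinite word all of whose prefix products have norm one---not just Berger--Wang.) Second, and more seriously, the passage from $\bar S$ back to $S$ is the crux of the theorem, and your argument for it rests on claims that do not hold: $V$ is attached to the single element $B^*$ and is not invariant under the other elements of $S$; the ``$V$-restrictions'' of elements of $S$ are contractions from $V$ into possibly different subspaces, not endomorphisms of $V$, let alone elements of $O(V)$; and the set of restrictions of the countable semigroup $S$ has no reason to be closed, so it is not ``a closed sub-semigroup, hence a subgroup, of $O(V)$.'' Consequently the pigeonhole only ever yields finite products acting \emph{approximately} orthogonally, and approximate orthogonality does not give $\rho(B)=1$ (e.g., $(1-\varepsilon)R$ for a rotation $R$ is $\varepsilon$-close to orthogonal with $\rho<1$); a limit argument to remove the $\varepsilon$ lands you back in $\bar S$ rather than in $S$, which is circular. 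Note also that even an exactly isometric action on $V$ is insufficient unless the product maps $V$ \emph{onto} $V$.

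For comparison: the paper does not reprove this statement (it cites Lagarias and Wang), but its proof of the generalization, Theorem~\ref{thm:sos.Lyap.then.finite}, shows exactly how this gap is closed. One works along a single infinite extremal word chosen so that infinitely many prefixes recur as later factors, and replaces the metric $\varepsilon$-approximation by an \emph{exact} descending chain of algebraic sets $S_t$ cut out by the level-set equations of the Lyapunov function; in the ellipsoidal case these are intersections with nested linear subspaces, so the chain stabilizes after at most $n$ strict drops, which is where the bound depending only on $n$ and $m$ comes from. Stabilization plus recurrence produces a finite product $A$ and a compact set $S$ bounded away from the origin with $AS\subseteq S$, whence $\rho(A)=1$ exactly. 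If you want to salvage your approach, replace ``two restrictions are $\varepsilon$-close in $O(V)$'' by ``two isometry subspaces coincide,'' which is a finite dimension-counting condition rather than a compactness condition.
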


In this section, we \aaa{further generalize this result to ``piecewise algebraic'' Lyapunov functions (see definition below).  This turns out to include all the classes of Lyapunov functions treated in this paper. Note that polynomial Lyapunov functions of degree $\geq 4$ do not in general define a norm as their sublevel sets may very well be non-convex. This is the reason for the need to work with a more general version of the result of Lagarias and Wang.}

\aaa{A minor technical remark before we get to our generalization: In Theorem~\ref{thm:ellipse.then.finite} (and in Theorem~\ref{thm:sos.Lyap.then.finite} below), the assumption is that the JSR is equal to one (i.e., the system is neither exponentially stable nor exponentially unstable).  However, the result trivially applies to any positive value of the JSR, by homogeneity of this quantity.  The condition would instead become that there exists a quadratic function $V$ (resp. a piecewise algebraic function $V$) such that $\forall x \in \re^n, \forall A \in \{A_1,\ldots,A_m\},\, V(A x)\leq \rho V(x).$  \\

By a \rj{\emph{piecewise algebraic Lyapunov function}} $V$ (of degree $d$), following the nomenclature of \cite{LagariasWang_Finiteness_Conjecture}, we mean a positive definite, homogeneous, and continuous function (of degree $d$) whose unit sphere is included in the zero level set of a polynomial $p(x)$, with the property that $p(0)\neq 0.$ We refer to $p$ as the ``associated polynomial'' of $V$.

We remark that the class of piecewise algebraic functions contains all types of Lyapunov functions presented in our introduction section. For example, if $V$ is a pointwise minimum of polynomials $q_1,\ldots,q_m$, then the associated polynomial would be $p(x)=\prod_{i=1}^m (q_i(x)-1)$. (Note that the unit sphere of $V$ is the union of the unit spheres of the polynomials $q_i$.) Similarly, if $V$ is a pointwise maximum of polynomials $q_1,\ldots,q_m$, then the associated polynomial would be $p(x)=\sum_{i=1}^m (q_i(x)-1)^2$. (Note that in this case the unit sphere of $V$ is the intersection of the unit spheres of the polynomials $q_i$.)}

\rj{
\aaa{We now present the generalized result of interest}.  The idea of the proof follows that of \cite{LagariasWang_Finiteness_Conjecture}.  However, we present here a simplified version, neglecting the quantitative estimation of the upper bound.
\begin{theorem}\label{thm:sos.Lyap.then.finite}
Let $\{A_1,\ldots,A_m\}$ be a set of $n \times n$ matrices {of JSR equal to one}. If there exists a \aaa{piecewise algebraic Lyapunov function} $V(x)$  that satisfies \begin{equation}\label{eq-pwa}\forall i=1,\ldots,m,\ \forall x\in\re^n, V(x)-V(A_ix)\geq 0, \end{equation} then $\{A_1,\ldots,A_m\}$ satisfies the finiteness property. Moreover, the length of the optimal product is upper bounded by a quantity that depends on $n$, $m$, and $d$ only, where $d$ is the degree of the polynomial associated with $V(x).$
\end{theorem}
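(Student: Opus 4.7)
The plan is to adapt the argument of Lagarias and Wang to the broader setting of piecewise algebraic Lyapunov functions, replacing the quadratic (ellipsoidal) unit sphere in their proof by the algebraic variety associated to $p$. After rescaling so that $\rho=1$, the sublevel sets of $V$ are forward invariant under each $A_i$, and the unit sphere $S=V^{-1}(1)$ is a compact subset of the algebraic set $Z=\{x\in\mathbb{R}^n : p(x)=0\}$, whose degree is controlled by $d$.

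First I would produce an \emph{extremal} orbit. Since $\rho=1$, there are sequences of products $A_{\sigma_k}\cdots A_{\sigma_1}$ whose norms do not decay to zero. Combining continuity of $V$, the non-increase of $V$ under each $A_i$, and compactness of $S$, a standard diagonal extraction yields a point $x^\star\in S$ and an infinite switching sequence $(\sigma_k)$ such that the entire forward orbit $\{A_{\sigma_k}\cdots A_{\sigma_1} x^\star\}_{k\ge 0}$ stays in $S$, hence in $Z$.

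Next, I would stratify $Z$ by a descending chain of semialgebraic sets encoding ``infinite extremal continuation.'' Set $E_0=Z$ and
\[
E_{k+1}=\{x\in E_k : \exists\, i\in\{1,\ldots,m\},\ A_i x\in E_k\}.
\]
Each $E_k$ is semialgebraic with defining polynomials of degree bounded in terms of $n$, $m$, $d$, and the previous step shows $x^\star$ belongs to every $E_k$. A finiteness result for descending chains of semialgebraic sets of bounded complexity (via cylindrical algebraic decomposition, or Bezout-type bounds on the number of connected components) then forces the chain to stabilize at some $E_K$ with $K=K(n,m,d)$. On $E_K$, every point admits a valid successor, so the multi-valued map $x\mapsto\{A_i x : A_i x\in E_K\}$ defines a genuine forward-invariant dynamics on a nonempty compact semialgebraic set.

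The final step extracts a periodic orbit from this dynamics. Decomposing $E_K$ into finitely many semialgebraic cells of bounded complexity and running any infinite extremal trajectory, pigeonhole forces a revisit of some cell. Exploiting the linearity of the $A_i$ together with the fact that $V$ is preserved along the extremal orbit (so that the restriction of an iterate of the product map acts as a $V$-isometry on the relevant stratum), the revisit can be promoted to a genuine periodic point: a nonzero $y\in E_K$ and indices $i_1,\ldots,i_\ell$ with $A_{i_\ell}\cdots A_{i_1} y = y$ and $\ell$ bounded in terms of $n$, $m$, $d$. This product has spectral radius at least one and, by $V$-non-expansivity, exactly one, which equals $\rho$, yielding the finiteness property with the desired bound. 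The main obstacle is precisely this last conversion from cell-revisiting to a genuine periodic point: the ellipsoidal case benefits from the compact-group structure of isometries of a quadratic form, a structure not directly available for a generic piecewise algebraic $V$, so it must be replaced by a more delicate algebraic-geometric argument that nonetheless keeps the resulting bound a function of $n$, $m$, and $d$ only.
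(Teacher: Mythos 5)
Your Step~1 coincides with the paper's, but Step~2 contains a genuine gap. The sets $E_{k+1}=\{x\in E_k:\exists\, i,\ A_ix\in E_k\}$ are \emph{not} of bounded complexity: writing $E_{k+1}=E_k\cap\bigcup_{i}A_i^{-1}(E_k)$ and unfolding the recursion, the number (and, after converting unions of zero sets into zero sets of products, the degrees) of the defining polynomials grow with $k$, so any stabilization bound from cylindrical algebraic decomposition or Bezout-type estimates would itself depend on $k$ --- the argument is circular. Moreover, the general principle you invoke is false as stated: a descending chain of semialgebraic sets each described by formulas of uniformly bounded complexity need not stabilize (consider $[0,1/k]$, $k=1,2,\dots$, each cut out by two linear inequalities). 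Effective stabilization is available only for chains of sets cut out by polynomial \emph{equations} lying in a \emph{fixed} finite-dimensional space, and this is precisely what the paper arranges: it first refines the extremal word (Lemma 3.2 of Lagarias--Wang) to one, $j_1j_2\dots$, infinitely many of whose prefixes recur later in the word, and then sets $S_t=\{x: V(x)\le 1,\ p(A_{j_1}x)=0,\dots,\ p(A_{j_t}\cdots A_{j_1}x)=0\}$. Since precomposition with a linear map does not raise the degree of $p$, every defining equation lies in the space of polynomials of degree at most $d$ in $n$ variables, of dimension $\binom{n+d}{n}$; hence the decreasing chain $(S_t)$ takes only that many distinct values, and the recurrence of prefixes produces $t_k,L_k$ with $S_{t_k}=S_{2t_k+L_k-1}$ and a product $A$ of bounded length with $AS\subseteq S$ for a compact $S$ bounded away from the origin.

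Your final step is also both harder than necessary and, as you concede, incomplete. No genuine periodic point $A_{i_\ell}\cdots A_{i_1}y=y$ is needed: from $AS\subseteq S$ one gets $A^TS\subseteq S$ for every $T$, so the $A$-orbit of any point of $S$ stays bounded away from $0$, forcing $\rho(A)\ge 1$; the reverse inequality follows from $\rho(\Sigma)=1$, so $\rho(A)=1$ and the finiteness property holds with the length of $A$ controlled by the chain-stabilization bound. In particular, the ``compact-group structure of isometries of a quadratic form'' plays no role in the ellipsoidal case either; the Noetherian chain argument is what does the work there too, and it is exactly the ingredient your existential/semialgebraic stratification loses.
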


\begin{proof}
{\bf Step 1.  }We start by proving that if $\rho=1$ and $V$ satisfies \eqref{eq-pwa}, there exists a \aaa{point $x^*\in\mathbb{R}^n$, with $V(x^*)=1,$} together with a sequence of matrices $A_{i_1},A_{i_2},\dots,$ such that for all $t\in\n ,$ $\aaa{V(A_{i_t}\dots A_{i_1}x^*)=1}.$  
Clearly, by (\ref{eq-pwa}), for any $x$ and for any $i_{1},i_2,\dots, i_t,$ $V(A_{i_t}\dots A_{i_1}x)\leq V(x).$  Now, for any finite $t,$ there exists an $x_t,\ V(x_t)=1,$ and a product $A_{i_{t,t}}\dots A_{i_{t,1}}$ such that \aaa{$$V(A_{i_{t,t}}\dots A_{i_{t,1}}x_t) =1. $$} (Indeed, the contrary would imply that $\forall x, \forall i_1,\dots i_t,\  V(A_{i_{t}}\dots A_{i_{1}}x)<\gamma <1,$ which would  contradict $\rho=1$.)  In fact, by (\ref{eq-pwa}), the product  has the stronger property that $A_{i_{t,t'}}\dots A_{i_{t,1}}x_t =1 \ \forall t'\leq t.$  \\
Finally, by compactness of the set $\{x:V(x)=1\},$ and compactness of the set of words $i_1\dots i_t:t\in \n$ under the standard metric\footnote{Take the following distance between two (finite or infinite) words: $d(w_1,w_2)=1/(2^l),$ where $l$ is the index of the first character for which $w_1(l)\neq w_2(l).$  The set of (finite and infinite) words is compact under this metric.}, the sequence $(x_t,(i_1,\dots i_t))$ has a subsequence which converges to some $(x^*,(l_1,l_2,\dots))$ with $V(x^*)=1.$ By continuity of $V(x),$ we have $V(A_{l_t}\dots A_{l_1}x^*)=1$ for all $t\geq 1.$  

\aaa{The rest of the proof follows the reasoning }of \cite{LagariasWang_Finiteness_Conjecture}.  \footnote{Our version is however simplified for our purposes and does not effectively compute the upper bound in the statement of the theorem. }

{\bf Step 2.  }Now, from the sequence $l_1,l_2\dots$ obtained above we extract a new one $j_1,j_2\dots =l_sl_{s+1}\dots$ for some $s$ with the supplementary property that there are infinitely many $t_k\in \n$ such that $j_1j_2\dots j_{t_k} =j_{t_k+L_k}\dots j_{t_k+L_k+t_k-1} $ for some $L_k>0:$ that is, infinitely many prefices of the sequence turn out to reappear  later in the sequence.  There always exists such a sequence $j_1,j_2,\dots$ (see \cite[Lemma 3.2]{LagariasWang_Finiteness_Conjecture}).  

{\bf Step 3.  }Let us denote \begin{equation}\label{eq-st}S_t=\{x \in \re^n : V(x)\leq 1,\ p(A_{j_1}x)=0,\ \dots,\  p(A_{j_t}\dots A_{j_1} x)=0\}.\end{equation}  These sets are nonempty (as they contain $x^*$) and bounded away from zero (as $p(0)\neq 0$).  By a standard algebraic geometry argument, the sets $S_t$ can only take finitely many different values (see \cite[Lemma 3.3]{LagariasWang_Finiteness_Conjecture}). They also are such that \begin{equation}\label{eq-st-incl}\forall t, S_{t+1}\subseteq S_t.\end{equation}   (All the inclusions are non-strict here.) Thus, there exists some $t_k$ (as defined in step 2 above) such that $S_{t_k} = S_{2t_k+L_k-1}.$   Plugging this last relation in \eqref{eq-st}, \eqref{eq-st-incl}, we obtain $$A_{t_k+L_k-1}\dots A_{1} S_{2t_k+L_k-1} \subseteq  S_{t_k}=S_{2t_k+L_k-1}. $$

{\bf Final Step.  }
Thus, we have a compact set $S,$ bounded away from zero, together with a product $A=A_{t_k+L_k-1}\dots A_{1}$ such that $AS\subseteq S.$ As a consequence, for any integer $T$, $(A)^T S\subset  S.$  Since $S $ is compact and bounded away from zero, $\rho(A)=1,$ and the proof is done.  

It is possible to effectively bound the length $t_k+L_k$ of the optimal product in the worst case; see \cite{LagariasWang_Finiteness_Conjecture} for details.
\end{proof}}
\rj{We remark that the theorem above positively answers \aaa{ a previous conjecture of ours (see \cite{ahmadi-jungers-ifac-2014}} after Theorem 6).}
%
%
We will need a quick lemma:

\begin{lemma}\label{lem-prop-blondel}
 The matrices in $$\cM= \{A_1,A_2\}=\left \{ \begin{bmatrix} 1 & 1 \\ 0 & 1\end{bmatrix},\begin{bmatrix} 1 & 0 \\ 1 & 1\end{bmatrix}\right \}$$ satisfy: \begin{equation} \label{prop-blondel}\forall x \neq 0,\  |\{Ax/||Ax||: \, A\in \cM^*\}| =\infty, \end{equation} where $\cm^*$ is the set of all products of matrices in $\cM.$
\end{lemma}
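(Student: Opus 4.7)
The plan is to show that, for any nonzero $x \in \re^2$, already the two one-parameter sub-families $\{A_1^k\}_{k\ge 0}$ and $\{A_2^k\}_{k\ge 0}$ inside $\cM^*$ are enough to produce infinitely many normalized directions. A one-line induction gives
$$A_1^k=\begin{pmatrix}1 & k \\ 0 & 1\end{pmatrix},\qquad A_2^k=\begin{pmatrix}1 & 0 \\ k & 1\end{pmatrix},$$
so that $A_1^k x=(x_1+kx_2,\,x_2)^T$ and $A_2^k x=(x_1,\,x_2+kx_1)^T$. The rest is a short case analysis.

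Write $x=(x_1,x_2)^T$. If $x_2\neq 0$, all vectors $\{A_1^k x\}_{k\ge 0}$ share the same second coordinate $x_2$ but have pairwise distinct first coordinates, hence represent pairwise distinct rays in $\re^2$; equivalently their slopes $x_2/(x_1+kx_2)$ are all different. Thus $\{A_1^k x/\|A_1^k x\|:k\ge 0\}$ is already infinite. Symmetrically, if $x_1\neq 0$, the family $\{A_2^k x\}_{k\ge 0}$ supplies infinitely many distinct directions.

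The only remaining boundary cases are $x=(x_1,0)^T$ and $x=(0,x_2)^T$ with the nonzero coordinate arbitrary. In the first case, $A_1$ fixes the direction of $x$, but one application of $A_2$ yields $A_2x=(x_1,x_1)^T$, which has both coordinates nonzero; applying the first part of the argument to the tail $\{A_1^k A_2 x:k\ge 0\}\subset \cM^* x$ then gives infinitely many directions. The second boundary case is handled symmetrically by first applying $A_1$.

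I do not anticipate any real obstacle. The essential content of the lemma is just the observation that $A_1$ and $A_2$ are unipotent, each with a one-dimensional eigenspace, and that these eigenspaces $\spann\{(1,0)^T\}$ and $\spann\{(0,1)^T\}$ are distinct, so no single nonzero vector is a common eigenvector. Nothing subtle about the free monoid structure of $\cM^*$ is needed beyond the fact that it contains all powers of $A_1$ and $A_2$ individually.
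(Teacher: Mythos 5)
Your proof is correct, but it takes a genuinely different route from the paper's. The paper works with the single product $A=A_1A_2=\left[\begin{smallmatrix}2&1\\1&1\end{smallmatrix}\right]$, observes that it is hyperbolic (two real eigenvectors with eigenvalues of distinct modulus), and concludes that the orbit $\{A^t x/\|A^t x\|\}_{t\in\n}$ is infinite whenever $x$ is not an eigenvector of $A$, patching the eigenvector case by first applying $A_1$. You instead exploit only the powers of the two unipotent generators, computing $A_1^k x=(x_1+kx_2,\,x_2)^T$ and $A_2^k x=(x_1,\,x_2+kx_1)^T$ explicitly; this is more elementary (no eigenvalue analysis, just the observation that vectors with a common nonzero coordinate and distinct other coordinates cannot be positive multiples of one another), while the paper's argument is the one that would generalize to any pair admitting a product with eigenvalues of distinct modulus and no common eigenvector. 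One cosmetic remark: your ``remaining boundary cases'' paragraph is redundant, since $x\neq 0$ already forces $x_1\neq 0$ or $x_2\neq 0$, each of which is covered by your first two cases; the extra detour through $A_2x=(x_1,x_1)^T$ is harmless but unnecessary.
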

\begin{proof}
One can verify that the product $A=A_1A_2$ has two real eigenvectors with eigenvalues of different modulus. Thus, if $x$ is not an eigenvector of $A$ it is clear that $|\{A^tx/||A^tx||:t\in \n\}|=\infty.$ Now, if $x$ is an eigenvector of $A,$ one can check that $A_1x$ is not, and we can reiterate the argument on the set $\{A^tA_1x/||A^tA_1x||:t\in \n\}$.
\end{proof}
\aaa{We can now state the results that we were after.}

\begin{corollary}
Let $\cM_\alpha$ be the set containing the following two matrices  \begin{equation} \label{matrices-blondel}A_1=\begin{bmatrix} 1 & 1 \\ 0 & 1\end{bmatrix}/\rho, \ A_2=\alpha \begin{bmatrix} 1 & 0 \\ 1 & 1\end{bmatrix}/\rho,\end{equation} with $$\rho=\rho(\begin{bmatrix} 1 & 1 \\ 0 & 1\end{bmatrix},\alpha_* \begin{bmatrix} 1 & 0 \\ 1 & 1\end{bmatrix}),$$ $$ \alpha_*\simeq 0.749326546330367557943961948091344672091...\footnote{See~\cite{FP_explicit_counterexample} for an expression for the exact value of $\alpha_*.$ It is known that $\cM_{\alpha_*}$ violates the finiteness property.}.$$ For any positive integer $d$, there exists a positive number $\alpha<\alpha_*$ such that the set of matrices $\cM_\alpha$ is asymptotically stable under arbitrary switching but does not admit a polynomial Lyapunov function of degree $\leq d$.
\end{corollary}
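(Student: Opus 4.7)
The plan is to argue by contradiction, combining a compactness argument with Theorem~\ref{thm:sos.Lyap.then.finite}. Roughly, if every $\alpha$ slightly below $\alpha_*$ admitted a polynomial Lyapunov function of degree $\leq d$, one could extract a limit Lyapunov function for $\cM_{\alpha_*}$, forcing the finiteness property on $\cM_{\alpha_*}$ and contradicting the fact, recalled in the footnote, that $\cM_{\alpha_*}$ is a counterexample to the finiteness conjecture.

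More concretely, I would assume toward contradiction that there is an integer $d$ such that every $\alpha<\alpha_*$ for which $\cM_\alpha$ is AAS admits a polynomial Lyapunov function of degree $\leq d$. Choose $\alpha_n\uparrow\alpha_*$ with $\cM_{\alpha_n}$ AAS; this is possible because $\rho(\cM_\alpha)$ is continuous in $\alpha$, equals $1$ at $\alpha=\alpha_*$, and is strictly increasing along the one-parameter family $\{B_1,\alpha B_2\}$ near $\alpha_*$. Let $p_n$ be a polynomial Lyapunov function of degree $\leq d$ for $\cM_{\alpha_n}$; passing to a subsequence, I may assume all $p_n$ are homogeneous of the same even degree $d'\leq d$. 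After rescaling so that $\max_{\|x\|=1}p_n(x)=1$, the $p_n$ lie in a compact subset of the finite-dimensional space of binary forms of degree $d'$, from which I extract a uniform limit $p_n\to p$. Since $A_{i,\alpha_n}\to A_{i,\alpha_*}$, the limit $p$ is a nonzero, non-negative, homogeneous polynomial of degree $d'$ satisfying $p(x)-p(A_{i,\alpha_*}x)\geq 0$ for all $x$ and $i=1,2$.

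The crux of the argument is to upgrade $p$ from merely semidefinite to strictly positive definite; otherwise $p$ is not a piecewise algebraic Lyapunov function in the sense of the paper and Theorem~\ref{thm:sos.Lyap.then.finite} cannot be invoked. Let $Z=\{x\neq 0 : p(x)=0\}$. For $x\in Z$, the inequality $0\leq p(A_{i,\alpha_*}x)\leq p(x)=0$ combined with invertibility of $A_{i,\alpha_*}$ forces $A_{i,\alpha_*}x\in Z$, so $Z$ is invariant under each $A_{i,\alpha_*}$. Projectively, $\bar Z\subset \mathbb{P}^1(\mathbb{R})$ is a finite set of size $\leq d'$ on which each $A_{i,\alpha_*}$ acts as a permutation of some finite order $k_i$, so every element of $\bar Z$ is an eigenvector direction of $A_{i,\alpha_*}^{k_i}$. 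But $A_{1,\alpha_*}$ and $A_{2,\alpha_*}$ are positive scalar multiples of non-diagonalizable matrices with unique invariant directions $[(1,0)^T]$ and $[(0,1)^T]$ respectively, and the same holds of every positive power; hence $\bar Z\subseteq\{[(1,0)^T]\}\cap\{[(0,1)^T]\}=\emptyset$, and $p$ is positive definite.

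Having established positive definiteness, $p$ qualifies as a piecewise algebraic Lyapunov function for $\cM_{\alpha_*}$ (its associated polynomial $p(x)-1$ satisfies $p(0)-1=-1\neq 0$). Since $\rho(\cM_{\alpha_*})=1$ by construction, Theorem~\ref{thm:sos.Lyap.then.finite} would then force $\cM_{\alpha_*}$ to satisfy the finiteness property, contradicting the footnote. The main obstacle in this plan is the positive-definiteness step: taking limits of positive definite forms generically yields only PSD forms, and the argument works here because of the special irreducible, ``maximally Jordan'' structure of the two matrices in $\cM_{\alpha_*}$ (distinct and unique invariant directions, preserved under taking powers).
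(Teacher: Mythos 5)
Your proof is correct and follows the paper's overall strategy: assume a uniform degree bound $d$, normalize and extract a limit polynomial $p$ for $\cM_{\alpha_*}$ by compactness of bounded forms of degree $\leq d$, show $p$ is positive definite, and invoke Theorem~\ref{thm:sos.Lyap.then.finite} to force the finiteness property on $\cM_{\alpha_*}$, contradicting the known failure of that property at $\alpha_*$. Where you genuinely diverge is the crux you correctly identify, namely upgrading the limit from PSD to positive definite. The paper does this via Lemma~\ref{lem-prop-blondel}, which shows that the normalized orbit of \emph{every} nonzero $x$ under the semigroup generated by the two unipotent matrices is infinite, so a single zero of $V$ on the sphere propagates to infinitely many zeros --- impossible for a nonzero binary form of degree $\leq d$. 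You instead note that the projective zero set $\bar Z$ is a finite subset of $\mathbb{P}^1(\mathbb{R})$ that is invariant under the (invertible) projective actions of $A_1$ and $A_2$, hence consists of periodic points, hence of eigendirections of some powers $A_1^{k_1}$ and $A_2^{k_2}$; since these powers are positive multiples of nontrivial unipotent matrices with the disjoint unique invariant directions $[(1,0)^T]$ and $[(0,1)^T]$, you get $\bar Z=\emptyset$. The two arguments exploit the same dynamical fact (the semigroup admits no finite invariant set of directions) from opposite ends: yours avoids exhibiting an infinite orbit and is arguably tighter, while the paper's lemma is a reusable standalone statement about the semigroup. One shared soft spot, which you at least gesture at and the paper leaves implicit, is the justification that $\cM_\alpha$ is indeed AAS for $\alpha<\alpha_*$ (i.e., that $\rho(\cM_\alpha)<1$ strictly); this is a known property of this one-parameter family but deserves a citation or a short monotonicity argument in either write-up.
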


\begin{proof}
Suppose by contradiction that each of these values of $\alpha$ admits a degree $d$ or smaller positive homogeneous polynomial Lyapunov function $V_\alpha$.  We suppose on top of that that the maximal coefficient of these polynomials $V_\alpha$ is one (scaling the coefficients if needed). Thus, the functions $V_\alpha$ tend to a limit $V= \lim_{\alpha \rightarrow \alpha^*} V_\alpha,$ which is again a homogeneous polynomial function with coefficients smaller or equal to one and degree smaller or equal to $d$ because the set of such functions is a compact set. Moreover, it is easy to see that this limit function is a valid Lyapunov function for every $\alpha<\alpha_*.$ Thus, $\cM_{\alpha_*}$ must satisfy the nonstrict inequalities:
\begin{equation}\label{eq-lyap-alpha}\forall A \in \cM_{\alpha_*}, V(Ax)\leq V(x). \end{equation}

Before we conclude, it remains to show that $V$ is a valid piecewise algebraic Lyapunov function as defined above.  The only nontrivial part is that $V$ is positive definite, i.e., that there is no $x\in \re^n,\ ||x||=1,$ such that $V(x)=0.$  In order to prove that, we make use of Lemma \ref{lem-prop-blondel}.  The lemma implies that if $V$ has a certain x, $||x||=1,$ such that $V(x)=0,$ by Equation \eqref{eq-lyap-alpha}, $V$ has infinitely many zeros on the Euclidean sphere, and thus it cannot be a homogeneous polynomial of degree at most $d$ with maximum coefficient equal to one.

We can thus apply Theorem \ref{thm:sos.Lyap.then.finite}, which implies that $\cM_{\alpha_*}$ satisfies the finiteness property, a contradiction.
\end{proof}

The next corollary is very similar but the matrix family that it presents is completely explicit.

\begin{corollary}
Consider the matrix family \\$(1-\frac{1}{k})\{A_1,A_2\}$, with 
$$A_1=\alpha^k\begin{bmatrix} 0 & 0 \\ 1 & 0\end{bmatrix},\ A_2=\alpha^{-1}\begin{bmatrix}
\cos\frac{\pi}{2k} & \sin\frac{\pi}{2k}  \\ -\sin\frac{\pi}{2k} &\cos\frac{\pi}{2k}
\end{bmatrix},$$ where $$1<\alpha<(\cos\frac{\pi}{2k})^{-1}.$$ For any positive integer $d$, there exists a positive integer $k$ such that the set of matrices $(1-\frac{1}{k})\{A_1,A_2\}$ is asymptotically stable under arbitrary switching but does not admit a polynomial Lyapunov function of degree $\leq d$.
\end{corollary}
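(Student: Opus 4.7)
The plan is to argue by contradiction and to use Theorem~\ref{thm:sos.Lyap.then.finite} to turn a hypothesized polynomial Lyapunov function of bounded degree into an \emph{extremal} Lyapunov function for an appropriately rescaled set, thereby obtaining a uniform bound on the length of the optimal product. This uniform bound will then contradict the fact, established in~\cite{LagariasWang_Finiteness_Conjecture}, that the length of the shortest optimal product for this family grows without bound in $k$.

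Denote by $\cM_k=\{A_1,A_2\}$ the unscaled matrix pair. From~\cite{LagariasWang_Finiteness_Conjecture} (see also Theorem~\ref{thm:Lyaps.blow.up.families}(3)) we have $\rho(\cM_k)=1$ and the length of the shortest optimal product of $\cM_k$ tends to infinity with $k$. In particular $(1-\tfrac{1}{k})\cM_k$ is AAS with JSR equal to $1-\tfrac{1}{k}$. Suppose for contradiction that for every sufficiently large $k$ the set $(1-\tfrac{1}{k})\cM_k$ admits a positive definite homogeneous polynomial Lyapunov function $V_k$ of (even) degree $\le d$, $d\ge 2$.

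Let $\gamma_k:=\max_{B\in(1-1/k)\cM_k,\,\|x\|=1}V_k(Bx)/V_k(x)$. Compactness together with the strict Lyapunov inequality give $\gamma_k<1$, and iterating the Lyapunov inequality along trajectories yields $\gamma_k\ge\rho((1-\tfrac{1}{k})\cM_k)=1-\tfrac{1}{k}$. Set $c_k:=((1-\tfrac{1}{k})/\gamma_k)^{1/(d-1)}\in(0,1]$ and $\tilde\cM_k:=c_k(1-\tfrac{1}{k})\cM_k$; then $\rho(\tilde\cM_k)=c_k(1-\tfrac{1}{k})$, and a short homogeneity computation shows that for every $C\in\tilde\cM_k$ and every $x$,
\[V_k(Cx)\le c_k^d\gamma_k V_k(x)=c_k(1-\tfrac{1}{k})V_k(x)=\rho(\tilde\cM_k)V_k(x).\]
Hence $V_k$ is an extremal Lyapunov function for $\tilde\cM_k$, and being a positive definite homogeneous polynomial of degree $d$ it is piecewise algebraic (with associated polynomial $V_k-1$). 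Applying Theorem~\ref{thm:sos.Lyap.then.finite} to $\tilde\cM_k$ (in the form valid for arbitrary positive JSR, noted just before the theorem), we conclude that $\tilde\cM_k$ satisfies the finiteness property with an optimal product of length at most some constant $f(2,2,d)$ depending only on $n=m=2$ and $d$. Since both the finiteness property and the length of the optimal product are invariant under positive scalar multiplication of the matrix set, $\cM_k$ itself admits an optimal product of length at most $f(2,2,d)$. Choosing $k$ large enough that the shortest optimal product of $\cM_k$ has length exceeding $f(2,2,d)$ (possible by the Lagarias-Wang construction) yields the desired contradiction.

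The only nontrivial step is the rescaling by $c_k$: a polynomial Lyapunov function for the strictly stable set $(1-\tfrac{1}{k})\cM_k$ typically over-certifies the JSR and therefore does not satisfy the extremal hypothesis $V(Bx)\le\rho V(x)$ of Theorem~\ref{thm:sos.Lyap.then.finite} on the nose. The factor $c_k$ is chosen precisely to absorb the slack between the one-step contraction rate $\gamma_k$ and the actual JSR, producing a rescaled set $\tilde\cM_k$ for which $V_k$ becomes extremal and whose finiteness behavior is, by homogeneity, equivalent to that of $\cM_k$.
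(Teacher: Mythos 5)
Your proposal is correct and follows essentially the same route as the paper: invoke Theorem~\ref{thm:sos.Lyap.then.finite} to bound the length of the optimal product in terms of $n$, $m$, and $d$, and contradict the Lagarias--Wang fact that the shortest optimal product of $\{A_1(k),A_2(k)\}$ has length $k+1$. In fact your rescaling by $c_k$ makes explicit a step the paper's one-paragraph proof leaves implicit, namely how a \emph{strict} polynomial Lyapunov function for the strictly stable set $(1-\frac{1}{k})\{A_1,A_2\}$ is converted into a function satisfying the exact inequality $V(Cx)\leq \rho\, V(x)$ required by Theorem~\ref{thm:sos.Lyap.then.finite}; this is a genuine improvement in rigor. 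One small inaccuracy: iterating $V_k(Bx)\leq\gamma_k V_k(x)$ for a degree-$d$ homogeneous $V_k$ gives $\rho\leq\gamma_k^{1/d}$, i.e.\ $\gamma_k\geq(1-\frac{1}{k})^{d}$, not $\gamma_k\geq 1-\frac{1}{k}$; so the claim $c_k\in(0,1]$ is unjustified. This is harmless, since nothing in your argument actually uses $c_k\leq 1$ --- only $c_k>0$ and the identity $c_k^{d}\gamma_k=c_k(1-\frac{1}{k})$ are needed, and Theorem~\ref{thm:sos.Lyap.then.finite} (in the form valid for arbitrary positive JSR) together with the scale-invariance of the finiteness property finishes the proof as you describe.
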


\begin{proof} Consider the matrix family $\{A_1(k),A_2(k)\}$. \rj{It is shown in}\cite{LagariasWang_Finiteness_Conjecture} that the JSR of this family is $1$ for all $k$. On the other hand, the spectral radii of all products of length $\leq k$ are less than one, whereas there is a product of length $k+1$ which achieves the JSR; i.e., has spectral radius one. As a result, by increasing $k$, the length of the optimal product blows up. Hence, by \rj{Theorem~\ref{thm:sos.Lyap.then.finite},} the degree of a polynomial Lyapunov function (applied to the asymptotically stable family $(1-\frac{1}{k})\{A_1(k),A_2(k)\}$) must blow up as well.
\end{proof}
\aaa{In Section~\ref{sec:numerical.example}, we test the result of the above corollary by searching for polynomial Lyapunov functions using semidefinite programming.}

\section{Complexity of Lyapunov functions and undecidability}\label{section-undecidability}
In this section, we show that our statements on lack of upper bounds on
complexity of Lyapunov functions also follow in a straightforward manner from
undecidability results. Compared to the results of the previous sections, the
new statements are weaker in some sense and stronger in some other. They are
weaker in that the statements are \emph{non-constructive}. However, they imply
the stronger statement that the complexity of Lyapunov functions (e.g., degree
or number of pieces) cannot be upper bounded, not only as a function of $n$ and
$m$, but also as a computable function of $n$, $m$, \rj{\emph{and the entries of the
matrices in $\Sigma$}} (Corollary~\ref{cor:no.computable.bound}).  In addition to this, we can further
establish that the same statements are true for very simple and restricted
classes of matrices whose entries take two different values only (see Theorem \ref{thm-binary}).

\begin{theorem}\label{thm-undecidability-nonconstructive}
For any positive integer $d,$ there are
families of matrices of size $47\times 47$ that are asymptotically stable under arbitrary switching but do not admit (i) a polynomial Lyapunov
function of degree $d,$ or (ii) a polytopic Lyapunov function
with $d$ facets, or (iii) a max-of-quadratics Lyapunov function with $d$ pieces, or (iv) a min-of-quadratics Lyapunov
function with $d$ pieces.\end{theorem}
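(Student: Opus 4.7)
The strategy is to argue by contradiction from the undecidability theorem of Blondel and Tsitsiklis \cite{BlTi2}, which states that, given two rational matrices of size $47\times 47$, it is algorithmically undecidable whether the associated switched linear system is absolutely asymptotically stable. I would first fix a positive integer $d$ and assume for contradiction that \emph{every} AAS pair of $47\times 47$ matrices admits a polynomial Lyapunov function of degree at most $d$; the other three cases will be treated identically.

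Next I would invoke Theorem~\ref{thm:matrix.families.with.Lyaps.are.semialgebraic}, which tells us that the set of tuples $(A_1,A_2) \in \mathbb{R}^{2\cdot 47^{2}}$ admitting a polynomial Lyapunov function of degree $\leq d$ is semialgebraic, with an explicit description by a Boolean combination of polynomial (in)equalities in the entries of the $A_i$. By Tarski's decidability of the first-order theory of the reals (the decision-problem counterpart of Theorem~\ref{thm:Tarski.Seidenberg}), membership in this semialgebraic set is algorithmically decidable whenever the input matrices have rational entries.

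Combining the two observations yields a decision procedure for AAS of $47\times 47$ switched systems with two matrices: feed the input to the Tarski decision algorithm for the semialgebraic description, and output ``AAS'' if and only if a polynomial Lyapunov function of degree $\leq d$ exists. Indeed, existence of such a function is always sufficient for AAS, and under the contrary assumption it is also necessary. This contradicts the Blondel--Tsitsiklis theorem, so for every $d$ there must exist an AAS family of $47\times 47$ matrices without a polynomial Lyapunov function of degree $\leq d$. The arguments for (ii), (iii), (iv) are the same verbatim, using the polytopic and piecewise-quadratic parts of Theorem~\ref{thm:matrix.families.with.Lyaps.are.semialgebraic}. I would also point out that the very same reduction yields the stronger Corollary~\ref{cor:no.computable.bound}: any purported computable upper bound $f(n,m,\Sigma)$ on the complexity of a Lyapunov function, combined with the Tarski procedure, would decide AAS and therefore cannot exist.

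The main obstacle I foresee is purely a bookkeeping one: verifying that the Blondel--Tsitsiklis reduction indeed gives undecidability of AAS (as opposed to mortality, boundedness, or $\ell^p$-stability) in exactly dimension $47$ with two matrices, so that the ``$47\times 47$'' in the theorem statement is honest. Once that hypothesis is in hand, the argument is essentially a one-line reduction; its non-constructive nature is the price paid for obtaining the non-computability strengthening, which the algebraic/finiteness arguments of Sections~\ref{section-algebraicity} and~\ref{section-finiteness} do not give.
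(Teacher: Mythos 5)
There is a genuine gap, and it is exactly the point you flag at the end as ``purely a bookkeeping one'': it is not bookkeeping, it is the crux. The Blondel--Tsitsiklis result (Theorem~\ref{thm-undecidability}) does \emph{not} say that AAS (i.e., ``$\rho(\Sigma)<1$?'') is undecidable; it says that ``$\rho(\Sigma)\leq 1$?'' is undecidable for pairs of nonnegative rational $47\times 47$ matrices. The decidability of the strict question $\rho<1$ is a well-known \emph{open} problem (tied to the rational finiteness conjecture), so your reduction---``run the Tarski procedure on the semialgebraic set of Theorem~\ref{thm:matrix.families.with.Lyaps.are.semialgebraic} and output AAS iff a degree-$\leq d$ Lyapunov function exists''---only produces a decision procedure for AAS, which contradicts nothing. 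As written, the contradiction never materializes.

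The missing idea, which is how the paper closes this gap, is to reduce the genuinely undecidable non-strict question to a single first-order sentence by exploiting homogeneity of the JSR: $\rho(\Sigma)\leq 1$ if and only if $(1-\epsilon)\Sigma$ is AAS for every $\epsilon\in(0,1)$. Under your contradiction hypothesis (every AAS $47\times47$ family admits a degree-$\leq d$ Lyapunov function, and conversely such a function certifies AAS), this becomes
$$\forall\,\epsilon\in(0,1)\ \exists\, p\in\mathcal{P}^+_{47,d}\ \ \forall x\neq 0,\ \forall A\in\Sigma:\ p((1-\epsilon)Ax)<p(x),$$
a sentence in the first-order theory of the reals with the extra universally quantified scalar $\epsilon$, which Tarski's procedure decides. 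That decides ``$\rho(\Sigma)\leq 1$?'' and yields the contradiction with Theorem~\ref{thm-undecidability}. Your remaining scaffolding (semialgebraicity of the degree-$\leq d$ certificate set, identical treatment of cases (ii)--(iv), and the observation that the same reduction gives Corollary~\ref{cor:no.computable.bound}) is fine and matches the paper, but without the $\epsilon$-quantification step the argument does not go through.
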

  The main ingredient in the proof is the following undecidability theorem, which is stated in terms of the JSR of a set of matrices.
\begin{theorem}(\cite{blondel03undecidable,BlTi2})\label{thm-undecidability}
The problem of determining, given a set of matrices $\Sigma,$ if $\rho(\Sigma)\leq 1$ is Turing-undecidable.
This result remains true even if $\Sigma$ contains only two matrices with nonnegative rational entries of size $47\times 47.$
\end{theorem}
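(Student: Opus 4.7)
The plan is to derive the statement by a Turing reduction from the undecidability of $\rho(\Sigma) \leq 1$ (Theorem~\ref{thm-undecidability}). I will suppose, for contradiction, that there exists a fixed positive integer $d$ such that \emph{every} AAS pair $\{A_1,A_2\}$ of $47\times 47$ matrices with rational entries admits a polynomial Lyapunov function of degree at most $d$ (the other three classes in the theorem are handled by exactly the same argument and will be discussed at the end).

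First, I would argue that the predicate ``the pair $(A_1,A_2)$ admits a polynomial Lyapunov function of degree $\leq d$'' is Turing-decidable on inputs with rational entries. By Theorem~\ref{thm:matrix.families.with.Lyaps.are.semialgebraic}, this property defines a semialgebraic subset of $\mathbb{R}^{2\cdot 47^2}$; inspecting the proof shows that the corresponding first-order formula in the language of ordered rings is effectively computable from $d$, $n$, and $m$. Tarski's decidability theorem for the first-order theory of real closed fields then yields an algorithm that, on input a rational pair $(A_1,A_2)$, decides membership in this set.

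Next, I would combine this algorithm with the contradiction hypothesis. Since existence of a polynomial Lyapunov function is always sufficient for AAS, and, by the hypothesis, necessary within the class of $47\times 47$ pairs, the algorithm above is a Turing decision procedure for AAS of pairs of $47\times 47$ rational matrices. The only remaining subtlety is that Theorem~\ref{thm-undecidability} is phrased for the predicate $\rho(\Sigma)\leq 1$, whereas AAS is $\rho(\Sigma)<1$. This gap is bridged by the well-known ``gap'' feature of the Blondel--Tsitsiklis construction: the instances produced satisfy either $\rho(\Sigma)<1$ or $\rho(\Sigma)\geq \gamma$ for some explicit $\gamma>1$, so that on the constructed instances the two predicates coincide and the undecidability transfers to AAS. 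A decision procedure for AAS therefore contradicts Theorem~\ref{thm-undecidability}, and the contradiction hypothesis must fail.

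Finally, to cover cases (ii)--(iv), I would invoke exactly the same Theorem~\ref{thm:matrix.families.with.Lyaps.are.semialgebraic}, which already asserts semialgebraicity (and, upon inspection, effective semialgebraicity) of the set of matrix tuples admitting a polytopic Lyapunov function with $\leq d$ facets, a max-of-quadratics Lyapunov function with $\leq d$ pieces, or a min-of-quadratics Lyapunov function with $\leq d$ pieces. The reduction proceeds identically in each case. The main obstacle I anticipate is simply verifying that the Tarski--Seidenberg projection invoked in the proof of Theorem~\ref{thm:matrix.families.with.Lyaps.are.semialgebraic} is genuinely constructive, so that the resulting semialgebraic description has a computable defining formula; this is standard but needs to be spelled out to make the reduction truly algorithmic.
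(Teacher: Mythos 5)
There is a genuine and fundamental problem: your proposal does not prove the statement in question at all. The statement is the Blondel--Tsitsiklis undecidability theorem itself (Theorem~\ref{thm-undecidability}), which the paper does not prove but imports from \cite{blondel03undecidable,BlTi2}. Your argument opens by saying you will ``derive the statement by a Turing reduction from the undecidability of $\rho(\Sigma)\leq 1$ (Theorem~\ref{thm-undecidability})'' --- that is, you assume the very statement you are asked to prove. What you have actually written is a proof of the \emph{other} undecidability-based result in the paper, Theorem~\ref{thm-undecidability-nonconstructive} (non-existence of degree-$d$ Lyapunov functions for some AAS families of $47\times 47$ matrices), which the paper proves in Section~\ref{section-undecidability} by exactly the route you describe: semialgebraicity plus Tarski quantifier elimination, contradicting Theorem~\ref{thm-undecidability}. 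A genuine proof of Theorem~\ref{thm-undecidability} would instead require a reduction from a known undecidable problem (in \cite{BlTi2} this is done via Post's correspondence problem and the emptiness problem for probabilistic finite automata, with an explicit encoding into pairs of nonnegative rational matrices of size $47\times 47$); nothing of that sort appears in your proposal.

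Separately, even read as a proof of Theorem~\ref{thm-undecidability-nonconstructive}, your bridge between ``$\rho(\Sigma)\leq 1$'' and AAS (``$\rho(\Sigma)<1$'') is wrong. You claim the Blondel--Tsitsiklis instances have a gap property: either $\rho<1$ or $\rho\geq\gamma$ for some $\gamma>1$. No such gap exists; the construction produces instances where $\rho$ can equal exactly $1$, and indeed if your claimed gap held, undecidability of the strict inequality $\rho<1$ would follow immediately --- but the decidability of that question is a well-known \emph{open} problem. The paper handles this point differently and correctly: it uses the equivalence $\rho(\Sigma)\leq 1 \Leftrightarrow \forall\epsilon\in(0,1),\ \rho((1-\epsilon)\Sigma)<1$, applies the contradiction hypothesis to the scaled (hence AAS) sets $(1-\epsilon)\Sigma$, and absorbs the quantifier over $\epsilon$ into the first-order sentence that Tarski's procedure decides. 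If you intend to salvage your argument for that theorem, you must replace the gap claim with this scaling-and-quantification step.
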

%
We now show that this result implies Theorem \ref{thm-undecidability-nonconstructive}.  The main ingredient is
Tarski's quantifier elimination theory, which gives a finite time procedure for checking certain quantified polynomial inequalities.  The rest is a technical transformation of the problem ``$\rho\leq 1$?'' to the existence of a degree $d$ polynomial Lyapunov function.

\begin{proof}[Proof of theorem \ref{thm-undecidability-nonconstructive}]
We suppose by contradiction that every AAS set of matrices of size $47\times 47$ has a polynomial Lyapunov function of degree at most $d,$ for some even natural number $d.$  We claim that this implies the algorithmic decidability of the question ``$\rho \leq 1?$''.  

Indeed, by homogeneity of the JSR, we have $$\rho(\Sigma)\leq 1 \Leftrightarrow \forall \epsilon\in(0,1),\ \rho((1-\epsilon)\Sigma)< 1.  $$
Now, by the hypothesis above, the last statement should be equivalent to the existence of a polynomial Lyapunov function of degree less than $d$ for $(1-\epsilon) \Sigma,$ and thus we can rephrase the question $\rho(\Sigma)\leq 1$ as follows.  (In what follows, $\mathcal{P}^+_{n,d}$ is the set of polynomials in $n$ variables and (even) degree $d$ \aaa{that are positive definite}. This set is semialgebraic. See Lemma \ref{lem:nonnegative.polys.semialgebraic} for a similar statement.)
\begin{eqnarray}\nonumber \forall  \epsilon\in(0,1), \, \exists p(\cdot)\in \mathcal{P}^+_{n,d} \mbox{ such that }\\ \nonumber \forall x\in \re^{47}\setminus \{0\}, \ \forall A \in \Sigma,\quad p((1-\epsilon)Ax)<p(x). \end{eqnarray}

\aaa{The above assertion is algorithmically decidable via Tarski's quantifier elimination theory~\cite{Tarski_quantifier_elim}}. Thus, it allows to decide whether $\rho(\Sigma)\leq 1,$ contradicting Theorem \ref{thm-undecidability}.

The proof for the polytopic Lyapunov function and max/min-of-quadratics goes exactly the same way, by noticing that once their number of components is fixed, one can decide their existence by \aaa{quantifier elimination} as well.
\end{proof}

In \cite{ BlTi2}, the authors note that Theorem \ref{thm-undecidability} implies the following result:
\begin{corollary}(\cite{BlTi2})
There is
no effectively computable function\footnote{See (\cite{BlTi2}) for a definition.} $t(\Sigma),$ which takes an arbitrary set of matrices with rational entries $\Sigma,$ and returns in finite time a natural number \rj{
such that $$\rho(\Sigma)=\max_{t'\leq t(\Sigma)}{\max_{A\in \Sigma}{\rho(A)}}.$$}
\end{corollary}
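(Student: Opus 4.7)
The plan is to argue by contradiction: if such an effectively computable function $t(\Sigma)$ existed, then one could decide in finite time whether $\rho(\Sigma)\le 1$ for a set of rational matrices, directly contradicting Theorem~\ref{thm-undecidability}. Concretely, given an arbitrary input set $\Sigma$ of rational matrices, the plan is to (a) compute $t(\Sigma)$, which terminates in finite time by hypothesis; (b) enumerate the finitely many matrix products $A_{i_{t'}}\cdots A_{i_1}$ of length $t'\le t(\Sigma)$ with $i_j\in\{1,\dots,m\}$; and (c) check whether each such product has spectral radius at most $1$ (after the appropriate $1/t'$-th root normalization implicit in the finiteness-style identity claimed in the corollary).

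The key observation that makes step (c) effective is that each product $A_{i_{t'}}\cdots A_{i_1}$ is itself a matrix with rational entries, and deciding whether a rational matrix has spectral radius at most one is algorithmically possible: it is a purely algebraic condition on the characteristic polynomial that can be settled, e.g., by Tarski's quantifier elimination (as already invoked in the proof of Theorem~\ref{thm-undecidability-nonconstructive}) or, more concretely, by deciding positive semidefiniteness of some $P\succ 0$ satisfying $A^{\top}PA\preceq P$. Combining (a)–(c), we obtain a finite-time procedure that returns ``yes'' if and only if $\rho(\Sigma)\le 1$, which contradicts the undecidability statement of Theorem~\ref{thm-undecidability} applied to the restricted class of $47\times 47$ nonnegative rational matrices. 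Thus no such computable $t(\Sigma)$ can exist.

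The only step that requires a little care is making sure that the spectral-radius test in (c) is indeed effective on rational input: I would either cite a standard result (rational spectral radius comparison to $1$ is decidable, since it reduces to root location for a rational polynomial, or equivalently to the existence of a quadratic Lyapunov function for a single matrix, which is a semidefinite feasibility problem over the rationals) or explicitly appeal to Tarski–Seidenberg on the statement $\exists P\succ 0: A^{\top}PA\prec P$. The ``main obstacle'' is essentially rhetorical rather than mathematical: one must be precise about what ``effectively computable'' means in the statement, and verify that the finite enumeration plus rational spectral-radius test conforms to the same computational model as the one in which Theorem~\ref{thm-undecidability} is formulated. Once this is pinned down, the contradiction is immediate.
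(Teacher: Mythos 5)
Your argument is correct and is exactly the reduction the paper intends: the corollary is stated without its own proof, attributed to \cite{BlTi2} as an immediate consequence of Theorem~\ref{thm-undecidability}, and the natural derivation is precisely your contradiction via computing $t(\Sigma)$, enumerating the finitely many products of length at most $t(\Sigma)$, and testing each rational product's spectral radius against $1$. One small caution: the alternative test you offer, $\exists P\succ 0$ with $A^{\top}PA\preceq P$, characterizes power-boundedness rather than $\rho(A)\leq 1$ (a non-semisimple eigenvalue on the unit circle has spectral radius one but admits no such $P$), so you should rely on your primary justification---deciding root location of the rational characteristic polynomial via Tarski--Seidenberg---which is sufficient and correct.
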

The same corollary can be derived concerning the degree of a Lyapunov function.
\begin{corollary}\label{cor:no.computable.bound}
There is
no effectively computable function $d(\Sigma),$ which takes an arbitrary set of matrices with rational entries $\Sigma,$ and returns in finite time a natural number
such that if $\rho(\Sigma)<1,$ there exists a polynomial Lyapunov function of degree less than $d.$
\end{corollary}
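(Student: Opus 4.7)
The plan is to adapt the proof of Theorem~\ref{thm-undecidability-nonconstructive} to the present setting. Suppose for contradiction that $d(\Sigma)$ is an effectively computable function with the stated property. Given a set $\Sigma$ of matrices with rational entries, I would first compute $N := d(\Sigma)$ in finite time, and then use Tarski's quantifier elimination (Theorem~\ref{thm:Tarski.Seidenberg}), following the reasoning in the proofs of Lemma~\ref{lem:nonnegative.polys.semialgebraic} and Theorem~\ref{thm:matrix.families.with.Lyaps.are.semialgebraic}, to decide in finite time whether there exists a positive-definite polynomial $p \in \mathcal{P}^+_{n,N}$ satisfying $p(x) - p(A_i x) > 0$ for all $x \neq 0$ and all $i=1,\dots,m$.

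By the posited property of $d$, the predicate decided above is equivalent to $\rho(\Sigma) < 1$: existence of any polynomial Lyapunov function is already sufficient for $\rho < 1$, and conversely whenever $\rho < 1$ the hypothesis on $d$ guarantees a Lyapunov function of degree at most $N$. So this routine would decide $\rho(\Sigma) < 1$ on rational inputs in finite time. To derive a contradiction with Theorem~\ref{thm-undecidability}, I would invoke the fact that the reduction of \cite{BlTi2} actually produces matrix families that avoid the boundary case $\rho = 1$: their constructed instances always satisfy either $\rho < 1$ or $\rho > 1$, so that the two predicates ``$\rho < 1$'' and ``$\rho \leq 1$'' coincide on this family. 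Since the latter is Turing-undecidable, so is the former, contradicting the decision procedure just constructed.

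The main obstacle I anticipate is this last step: verifying that the Blondel--Tsitsiklis reduction stays strictly away from $\rho = 1$, a feature implicit in their construction but not made explicit in the quotation of Theorem~\ref{thm-undecidability}. If one wishes to bypass this technicality, a self-contained alternative is to perturb $\Sigma \mapsto (1-\epsilon)\Sigma$ and eliminate $\epsilon$ by quantifier elimination as done in Theorem~\ref{thm-undecidability-nonconstructive}; the subtlety there is that the degree bound $d((1-\epsilon)\Sigma)$ itself now varies with $\epsilon$, so one needs an additional uniformity or monotonicity observation on $d$ to place this bound outside the $\forall \epsilon$ quantifier. Either route yields the desired contradiction and hence the non-existence of the claimed computable $d$.
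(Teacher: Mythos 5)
Your core reduction---compute $N=d(\Sigma)$, then use quantifier elimination to decide whether $\Sigma$ admits a positive definite polynomial Lyapunov function of degree less than $N$---is exactly the intended adaptation of the proof of Theorem~\ref{thm-undecidability-nonconstructive}, and you have correctly put your finger on the crux: this procedure decides the \emph{strict} predicate ``$\rho(\Sigma)<1$'', whereas Theorem~\ref{thm-undecidability} asserts undecidability of ``$\rho(\Sigma)\leq 1$''. The difficulty is that neither of your two ways of bridging this gap works. For the first route, the claim that the Blondel--Tsitsiklis instances ``always satisfy either $\rho<1$ or $\rho>1$'' is not something you can invoke: nothing in their reduction (from emptiness of probabilistic finite automata) guarantees that the boundary case $\rho=1$ is avoided, and if it \emph{were} avoided, Theorem~\ref{thm-undecidability} would immediately yield undecidability of the strict question ``$\rho(\Sigma)<1$''---which is a well-known \emph{open} problem (see the discussion of open questions in \cite{BlTi2} and \cite{Raphael_Book}). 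So route one proves too much, which is a strong sign the premise is false.

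For the second route, the ``uniformity or monotonicity observation on $d$'' that you flag as needed is not available. The function $d$ is an arbitrary computable function, and, more importantly, the true minimal Lyapunov degree of $(1-\epsilon)\Sigma$ genuinely tends to infinity as $\epsilon\to 0^+$ when $\rho(\Sigma)=1$---that is the central phenomenon of this paper---so $\sup_{\epsilon\in(0,1)}d((1-\epsilon)\Sigma)$ can be infinite and there is no single integer to place outside the $\forall\epsilon$ quantifier. This is precisely what distinguishes the corollary from Theorem~\ref{thm-undecidability-nonconstructive}, where the degree bound is one fixed integer valid for every $(1-\epsilon)\Sigma$ simultaneously, so that the whole statement collapses into a single Tarski sentence. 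For comparison: the paper itself offers only a one-sentence justification mirroring the \cite{BlTi2} corollary on the product length $t(\Sigma)$, but that analogy is imperfect, since a computable $t(\Sigma)$ would make $\rho(\Sigma)$ an exactly computable algebraic number (for which ``$\leq 1$'' is decidable), while a computable degree bound only yields a test for the strict inequality. So the obstruction you isolated is real---and you deserve credit for seeing it---but your proposal does not overcome it, and as written the argument has a genuine gap at exactly that point.
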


Next, we show a similar result, which does not focus on the fixed size of the matrices in the family, but somehow on the complexity of the real numbers defining the entries of the matrices.  Namely, we show that such negative results also hold essentially for sets of binary matrices (that is, matrices with only 0/1 entries).  In fact, the very question $\rho \leq 1$ is easy to answer in this case (see \cite{jungersprotasovblondel06}), so, one cannot hope to have strong negative results stated in terms of binary matrices.  However, it turns out that for an arbitrary integer $K$ the question $\rho \leq K$ for binary matrices is as hard as the question $\rho \leq 1$ for rational matrices.  More precisely, we have the following theorem:

\begin{theorem}(\cite{jungers-blondel-finiteness}\label{thm-rational-binary})
Given a set of $m$ nonnegative rational matrices $\Sigma,$ it is possible to build a set of $m$ binary matrices $\Sigma'$ (possibly of larger dimension), together with a natural number $K$ such that for any product $A=A_{i_1}\dots A_{i_t}\in \Sigma^t,$ the corresponding product $A'_{i_1}\dots A'_{i_t}\in \Sigma'^t$ has numerical values in \rj{each of} its entries that are \rj{either} exactly equal to zero, or to \rj{some entry} in the product $A$ multiplied by $K^t.$  Moreover, for any entry in the product $A,$ there is an entry in the product $A'$ with the same value multiplied by $K^t.$
\end{theorem}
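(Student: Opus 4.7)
The plan is to give an explicit constructive transformation from the rational matrices to the binary ones, preserving the multiplicative structure of products up to a uniform factor of $K$ per multiplication.

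First I would clear denominators: letting $D$ be a common denominator of the entries of $A_1,\ldots,A_m$, the matrices $N_i := DA_i$ are nonnegative integer matrices with entries bounded by some integer $M$. I would pick $K$ depending on $D$ and $M$ (the natural first attempt being $K=D$, possibly further enlarged so that $K\geq M$), and construct each binary matrix $A'_i$ as an $(nK)\times(nK)$ block matrix, partitioned into $n\times n$ blocks of size $K\times K$. The $(p,q)$-block of $A'_i$ would be a specific binary ``gadget'' $B_c$ depending only on the integer $c:=(N_i)_{pq}$, tailored so that composition of such gadgets is multiplicatively faithful.

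Next, the core computation is that the $(p,q)$-block of $A'_{i_1}\cdots A'_{i_t}$ equals
\[
\sum_{p_1,\ldots,p_{t-1}}\; B_{(N_{i_1})_{p,p_1}}\, B_{(N_{i_2})_{p_1,p_2}}\,\cdots\, B_{(N_{i_t})_{p_{t-1},q}}.
\]
I would verify that, for a correctly designed family $\{B_c\}$, each entry of this block is either zero or equal to $(K/D)^t\,(N_{i_1}\cdots N_{i_t})_{p,q}=K^t\,(A_{i_1}\cdots A_{i_t})_{p,q}$, and that every entry of $A_{i_1}\cdots A_{i_t}$ is realized in some such position. The natural interpretation is in terms of walks in a layered copy-graph: the gadget $B_c$ encodes $c$ parallel bipartite connections between the $K$ copies of the source and target vertices, so that the number of length-$t$ walks from a designated source to a designated target equals $K^t$ times the weighted walk count in the original graph whose adjacencies are $N_{i_1},\ldots,N_{i_t}$.

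The hard part is the gadget design. Naive attempts fail: for instance, placing $c$ ones along the first row of a $K\times K$ block yields $B_{c_1}B_{c_2}=B_{c_2}$ whenever $c_1\geq 1$, so iterated products lose all information about earlier factors; analogously, rectangular ``$c\times c$'' blocks of ones yield only $\min(c_1,c_2)$ upon composition. The resolution, following the construction in \cite{jungers-blondel-finiteness}, is to use a bipartite copy-structure gadget in which each vertex on the source side is linked to a carefully chosen subset of $c$ vertices on the target side; the crucial combinatorial feature is that composing $t$ such gadgets accumulates simultaneously a multiplicative factor $c_1\cdots c_t$ (recovering the desired entry of $N_{i_1}\cdots N_{i_t}$) and an additional factor $K^t$ from the branching over the $K$ copies at each intermediate level, yielding exactly the $K^t$-scaling claimed in the statement.
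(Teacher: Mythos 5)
First, a remark on the comparison itself: the paper does not actually prove Theorem~\ref{thm-rational-binary}. It is imported verbatim from \cite{jungers-blondel-finiteness} and used as a black box in the proof of Theorem~\ref{thm-binary}, so there is no in-paper argument to measure your proposal against; it has to stand on its own.

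On its own terms, your skeleton is reasonable --- clear denominators, blow each index up into $K$ copies, reduce everything to a binary block gadget $B_c$ encoding the integer entry $c$ --- and you correctly diagnose why the two naive gadgets fail. But the proof has a genuine gap exactly where the theorem lives: the gadget is never constructed. You list the properties it must have and then write ``following the construction in \cite{jungers-blondel-finiteness}'', which is circular, since that construction \emph{is} the content of the statement being proved. Moreover, the properties you demand of $\{B_c\}$ appear unachievable as you state them. For the block $\sum_{p_1,\ldots,p_{t-1}} B_{(N_{i_1})_{p,p_1}}\cdots B_{(N_{i_t})_{p_{t-1},q}}$ to have every entry equal to $0$ or to $(K/D)^t(N_{i_1}\cdots N_{i_t})_{p,q}$, each summand $B_{c_1}\cdots B_{c_t}$ must have all its nonzero entries equal to $(K/D)^t c_1\cdots c_t$ \emph{and} the zero patterns of the different path-summands must coincide; otherwise a position receives contributions from only a sub-collection of paths and the resulting partial sum is neither $0$, nor the full value, nor (in general) $K^t$ times any other entry of $A$. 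Already at $t=1$ the first requirement forces $(K/D)\,c\in\{0,1\}$ for every entry value $c$ of the $N_i$, i.e., all nonzero integer entries equal --- false in general. (Indeed the statement as transcribed is delicate at $t=1$ for the same reason, which indicates that the actual construction exploits the weaker, position-dependent clause ``some entry in the product $A$'' rather than the uniform per-block identity you compute, e.g., by controlling column sums within blocks rather than individual entries.) Until you exhibit an explicit family $\{B_c\}$, verify the composition identity it satisfies, and show how the sum over intermediate indices still lands on $K^t$ times an entry of $A$, the argument is a reduction of the theorem to an unproven combinatorial claim rather than a proof.
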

\rj{Thus, the above theorem provides a way to encode any set of rational matrices in a set of binary matrices (in the sense that (i) the joint spectral radius is the same, up to a known factor $K$, and (ii) that there is a perfect correspondence between products of matrices in $\Sigma$ and in $\Sigma'$).}
Theorem \ref{thm-undecidability} together with Theorem \ref{thm-rational-binary} allows us to prove another negative result on the degree of Lyapunov functions restricted to matrices with entries all equal to a same number $1/K,$ $K\in \q.$ Remark that the fact that the parameter $K\in \q$ has unbounded denominator and numerator is unavoidable in such an undecidability theorem, since for bounded values, there is a finite number of matrices with all entries in the range, and this \rj{would rule} out a result as the one in the theorem below.
\rj{\begin{theorem}\label{thm-binary}
There is no computable function $d:\n \times \q \rightarrow \n$ such that for any set of matrices of dimension $n$ with entries all taking values in the set  $\{0,K\}$ for some $K\in \q,$ the set is AAS if and only if there exists a (strict) polynomial Lyapunov function of degree $d(n,K).$\end{theorem}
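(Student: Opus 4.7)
The plan is to argue by contradiction: assuming such a computable $d:\n\times\q\to\n$ exists, I would exhibit a Turing procedure that decides, for an arbitrary set $\Sigma$ of two nonnegative rational $47\times 47$ matrices, whether $\rho(\Sigma)\leq 1$, thereby contradicting Theorem \ref{thm-undecidability}. The two ingredients I would combine are Theorem \ref{thm-rational-binary}, which allows one to encode any set of nonnegative rational matrices as a set of binary matrices up to a uniform scaling, and the quantifier-elimination argument already used in the proof of Theorem \ref{thm-undecidability-nonconstructive}.

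Concretely, given an instance $\Sigma$ of the hard problem of Theorem \ref{thm-undecidability}, I would first invoke Theorem \ref{thm-rational-binary} to effectively compute a binary set $\Sigma'$ of some dimension $n'$ together with a natural number $K_0$, such that each length-$t$ product in $\Sigma'$ has entries that are either $0$ or exactly $K_0^t$ times the corresponding entries in the length-$t$ product of $\Sigma$. Taking $t$-th roots of the max-entry functional (which agrees with the operator norm up to dimensional constants and hence yields the same limit as $t\to\infty$) gives $\rho(\Sigma')=K_0\,\rho(\Sigma)$. Setting $\Sigma''\mathrel{\mathop:}=\Sigma'/K_0$, we obtain a set of matrices whose entries all lie in $\{0,\,K\}$ with $K\mathrel{\mathop:}= 1/K_0\in\q$, and which satisfies $\rho(\Sigma'')=\rho(\Sigma)$. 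Following the proof of Theorem \ref{thm-undecidability-nonconstructive}, homogeneity of the JSR yields
\[\rho(\Sigma'')\leq 1 \;\Longleftrightarrow\; \forall\epsilon\in(0,1),\ (1-\epsilon)\Sigma''\text{ is AAS},\]
and, by our contradiction hypothesis, the right-hand side is equivalent to the existence of a strict polynomial Lyapunov function of degree $D\mathrel{\mathop:}= d(n',K)$ for $(1-\epsilon)\Sigma''$, where $D$ is a specific integer computable from $\Sigma$. Expanding ``$p$ is a strict polynomial Lyapunov function'' using Lemma \ref{lem:nonnegative.polys.semialgebraic}, the whole condition becomes a sentence in the first-order theory of the real closed field, namely
\[\forall\epsilon\in(0,1),\ \exists p\in\mathcal{P}^+_{n',D},\ \forall x\in\re^{n'}\setminus\{0\},\ \forall A\in\Sigma'',\ p((1-\epsilon)Ax)<p(x),\]
which is decidable by Tarski's quantifier elimination. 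Composing the reduction with this decider gives a Turing algorithm for ``$\rho(\Sigma)\leq 1$?'', a contradiction.

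The step I expect to be the main obstacle is ensuring that the matrices produced by the reduction actually have all nonzero entries equal to a \emph{single} rational number $K$, rather than to arbitrary rational values---this restriction is what distinguishes Theorem \ref{thm-binary} from a direct application of Theorem \ref{thm-undecidability-nonconstructive}, and it is precisely why Theorem \ref{thm-rational-binary} (whose output is binary, so that a single uniform rescaling suffices) is the right tool. A secondary technical point is that the parameter $K=1/K_0$ fed to $d$ must itself be computable from $\Sigma$; this is automatic from the effectiveness of the construction in Theorem \ref{thm-rational-binary}. Once these structural constraints are in place, the quantifier-elimination step carries over verbatim from the proof of Theorem \ref{thm-undecidability-nonconstructive}, so no additional difficulty arises in dispatching the polynomial Lyapunov existence question once the degree bound $D$ has been fixed.
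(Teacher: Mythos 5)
Your proposal is, in substance, the paper's proof: the paper argues in a single sentence that Theorem \ref{thm-rational-binary} converts the hard $47\times 47$ rational instances of Theorem \ref{thm-undecidability} into binary matrices, that dividing by the scaling constant puts all nonzero entries at a single computable rational while preserving the JSR, and that the quantifier-elimination reduction from the proof of Theorem \ref{thm-undecidability-nonconstructive} then yields the contradiction. You have correctly identified both ingredients and both structural points (the uniform entry value and the computability of $K$ from $\Sigma$).

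That said, the step you describe as carrying over ``verbatim'' is precisely where a genuine subtlety hides, one that the paper's terse proof also leaves unaddressed. In your displayed sentence you fix the degree $D=d(n',K)$ uniformly in $\epsilon$, but the hypothesis being refuted only entitles the scaled set $(1-\epsilon)\Sigma''$, whose nonzero entries equal $(1-\epsilon)K$, to a Lyapunov function of degree $d(n',(1-\epsilon)K)$ --- a quantity that varies with $\epsilon$ and, since $d$ is an arbitrary computable function, need not stay bounded as $\epsilon\to 0$. Hence the claimed equivalence between $\rho(\Sigma'')\leq 1$ and your single Tarski sentence does not follow from the hypothesis: when $\rho(\Sigma'')=1$ exactly, the sentence could be false for every fixed $D$, and the decision procedure would answer incorrectly. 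The obvious repair --- deciding, for each rational $\epsilon_j=1/j$ separately, whether a Lyapunov function of degree $d(n',(1-\epsilon_j)K)$ exists --- only exhibits ``$\rho\leq 1$'' as a countable intersection of decidable conditions, i.e., as co-semi-decidable; that is already true unconditionally (since ``$\rho>1$'' is semi-decidable via Berger--Wang) and contradicts nothing. In the proof of Theorem \ref{thm-undecidability-nonconstructive} this issue does not arise because there the hypothesized degree bound depends only on the dimension and is therefore the same for every $(1-\epsilon)\Sigma$. To close the argument one needs some additional device, e.g., a uniform bound on $d(n',\cdot)$ over the set $\{(1-\epsilon)K:\epsilon\in(0,1)\}$, which the stated hypothesis does not supply; as written, this gap is shared by your proposal and by the paper's own proof.
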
}
\begin{proof}
Given a set of rational matrices of size $47\times 47,$ Theorem \ref{thm-rational-binary} allows us to build a set of binary $n'\times n'$ matrices $\Sigma'$ such that $\rho(\Sigma')=K\rho(\Sigma).$ Thus, $\rho(\Sigma'/K)=\rho(\Sigma),$ and the existence of strict polynomial Lyapunov functions for AAS $n'\times n'$ matrices would again imply decidability of the question $\rho\leq 1$ for $\Sigma'/K,$ which again contradicts Theorem \ref{thm-undecidability}.
\end{proof}

\rj{Roughly speaking, our last theorem shows that in higher dimensions,} ``bad'' families of matrices that necessitate arbitrarily complex Lyapunov functions can have very simple and structured entries.

\section{Numerical validation}\label{sec:numerical.example}

\aaa{In this section, we test the result of Theorem~\ref{thm:Lyaps.blow.up.families} through a numerical experiment. This will also give us a feel for how quickly the complexity parameter of the Lyapunov function can blow up in practice.

As a representative example, we choose the matrices of Lagarias and Wang from~\cite{LagariasWang_Finiteness_Conjecture}, parameterized by a single integer $k$. For each $k>0$, we know the matrices have JSR$<1$, but we try to find the smallest degree sum of squares Lyapunov function that can prove this. A nice feature of this example is that since the matrices are $2\times 2$, the Lyapunov functions and their increments will be bivariate forms. It is well-known that all nonnegative bivariate forms are sums of squares (see, e.g.,~\cite{Reznick}). Hence, the minimum degree of an sos Laypunov function is also exactly the minimum degree of a polynomial Lyapunov function. However, working with the sos formulation allows us to to compute this minimum degree using semidefinite programming. See, e.g.,~\cite{PhD:Parrilo},~\cite{Tutorial_CDC14_DSOSLyap} for an explanation of how this is done. Our SDP solver throughout the example is SeDuMi~\cite{sedumi}. It would be worthwhile to perform similar experiments with min/max-of-quadratics or polytopic Lyapunov functions, etc. 

The input matrices to our problem are the following

$$A_1=(1-\frac{1}{10^3 k})\alpha^k\begin{bmatrix} 0 & 0 \\ 1 & 0\end{bmatrix},\ A_2=(1-\frac{1}{10^3 k})\alpha^{-1}\begin{bmatrix}
\cos\frac{\pi}{2k} & \sin\frac{\pi}{2k}  \\ -\sin\frac{\pi}{2k} &\cos\frac{\pi}{2k}\end{bmatrix},$$ where $$\alpha=\frac{1}{2}(1+(\cos\frac{\pi}{2k})^{-1})
,$$ and $k$ is a parameter. 
Our results are as follows:
\begin{table}[h]
\begin{tabular}{l|llllll}
$k$ &2  &3  &4  &5  &6  &7  \\ \hline
 $d$& 6 &10  &14  &16  &20  &24 
\end{tabular}
\end{table}

This table is stating, for $k=7$ for example, that the matrices are stable under arbitrary switching and yet this fact cannot be proven with a polynomial Lyapunov function of degree less than 24. This trend keeps growing as expected until we run in numerical issues with accuracy of the SDP solver. We believe this example can now serve as a parametric benchmark problem for comparing the power of optimization-based techniques for proving stability of switched systems.
}

\section{Conclusion}\label{sec:conclusion}

In this paper, we leveraged results related to non-algebraicity, undecidability, and the finiteness property of the joint spectral radius to demonstrate that commonly used Lyapunov functions for switched linear systems can be arbitrarily complex, even in fixed dimension, or for matrices with lots of structure.
%
%
%

If these negative results are bad news for the practitioner, it is worth mentioning that in practice the different Lyapunov functions often have complementary performance. So while there certainly exist instances which make all methods fail (as we have shown), one can hope that in practice, at least one of the different Lyapunov methods would be able to certify stability. In light of this, we believe it is important to (i) understand systematically how the different methods compare to each other, and (ii) identify subclasses of matrices that if stable, are guaranteed to admit ``simple'' Lyapunov functions. While the latter research objective has been reasonably achieved for quadratic Lyapunov functions, results of similar nature are lacking for even slightly more complicated Lyapunov functions (say, polynomials of degree $4$, or piecewise quadratics with $2$ pieces).

%

\bibliographystyle{plain}
\include{pablo_amirali.bib}
\bibliography{pablo_amirali}

\end{document}